\documentclass[11pt]{amsart}


\usepackage{amsfonts,amsmath,latexsym,amssymb,verbatim,amsbsy,amsthm}

\usepackage{graphicx}

 

\usepackage[top=1in, bottom=1in, left=1in, right=1in]{geometry}

\usepackage[dvipsnames]{xcolor}

\usepackage[colorlinks=true, pdfstartview=FitV, linkcolor=RoyalBlue,citecolor=ForestGreen, urlcolor=blue]{hyperref}



\theoremstyle{plain}
\newtheorem{THEOREM}{Theorem}[section]

\newtheorem{theorem}[THEOREM]{Theorem}
\newtheorem{corollary}[THEOREM]{Corollary}
\newtheorem{lemma}[THEOREM]{Lemma}

\theoremstyle{definition}

\theoremstyle{remark}

\newtheorem{remark}[THEOREM]{Remark}

\numberwithin{equation}{section}
\numberwithin{figure}{section}


\newcommand{\thm}[1]{Theorem~\ref{#1}}


\def \a {\alpha}
\def \b {\beta}
\def \g {\gamma}
\def \d {\delta}
\def \e {\mu} 

\def \l {\lambda}
\def \n {\nabla}

\def \L {\Lambda}

\def \O {\Omega}
\def \calC {{\mathcal C}}

 
\def \ba {{\bf a}}

\def \bu {{\bf u}}
\def \bv {{\bf v}}
\def \bx {{\bf x}}
\def \by {{\bf y}}
\def \bw {{\bf w}}

\def \bF {{\bf F}}

\def \bQ {{\bf Q}}

\def \bX {{\bf X}}
\def \bY {{\bf Y}}
\def \bV {{\bf V}}


\def \cA {\mathcal{A}}

\def \cD {\mathcal{D}}
\def \cE {\mathcal{E}}

\def \cI {\mathcal{I}}

\def \cK {\mathcal{K}}

\def \cO {\mathcal{O}}
\def \cP {\mathcal{P}}

\def \cX {\mathcal{X}}


\newcommand{\R}{\ensuremath{\mathbb{R}}}   
\newcommand{\T}{\ensuremath{\mathbb{T}}}   



\def \loc {\mathrm{loc}}


\def \lan {\langle}
\def \ran {\rangle}

\def \p {\partial}

\def \ss {\subset}

\renewcommand{\geq}{\geqslant}
\renewcommand{\ge}{\geqslant}
\renewcommand{\leq}{\leqslant}


\newcommand{\radial}[1]{|#1|}

\DeclareMathOperator{\supp}{supp} %
\DeclareMathOperator{\conv}{conv} %
\DeclareMathOperator{\diam}{diam} %
 %
 %
 %
 %
 %
 %
 %
 %



\def \dbx  {\, \mbox{\upshape{d}}\bx}
\def \dby  {\, \mbox{\upshape{d}}\by}

\def \dy  {\, \mbox{d}y}

\def \dr  {\, \mbox{d}r}
\def \ds  {\, \mbox{d}s}

\def \dbv  {\, \mbox{d}\bv}

\def \ddt  {\frac{\mbox{d\,\,}}{\mbox{d}t}}

\def \Exp {\mathrm{Exp}}

\begin{document}

\title[Multi-flocks: emergent dynamics in multi-scale collective behavior]{Multi-flocks: emergent dynamics  in\\ systems with multi-scale collective behavior}

\author{Roman Shvydkoy}
\address{Department of Mathematics, Statistics, and Computer Science, 
	University of Illinois, Chicago}
\email{shvydkoy@uic.edu}

\author{Eitan Tadmor}
\address{Department of Mathematics, Center for Scientific Computation and Mathematical Modeling (CSCAMM), and Institute for Physical Sciences \& Technology (IPST), University of Maryland, College Park}
\email{tadmor@cscamm.umd.edu}

\date{\today}

\subjclass{92D25, 35Q35, 76N10}

\keywords{alignment, Cucker-Smale, large-time behavior, multi-scale, multi-flock, hydrodynamics}

\thanks{\textbf{Acknowledgment.} Research was supported in part by NSF grants 
	DMS16-13911, RNMS11-07444 (KI-Net) and ONR grant 
	N00014-1812465 (ET), and by NSF
	grant  DMS-1813351 (RS). RS thanks CSCAMM and University of Maryland for hospitality during the preparation of this paper.
}

\begin{abstract}
We  study  the multi-scale description of large-time  collective behavior
of agents driven by alignment. 
The resulting \emph{multi-flock dynamics}  arises naturally with realistic  initial configurations   consisting of multiple spatial scaling, which in turn peak at different time scales. We derive a `master-equation' which describes a complex multi-flock congregations governed by two ingredients: (i) a fast inner-flock communication; and  (ii) a  slow(-er) inter-flock communication. The latter is driven by macroscopic observables which feature the \emph{up-scaling} of the problem. We extend the current mono-flock theory, proving a series of results which describe rates of  multi-flocking  with natural dependencies on communication strengths. Both agent-based, kinetic, and hydrodynamic descriptions are considered, with particular emphasis placed on the discrete and macroscopic descriptions. 
\end{abstract}

\maketitle
\setcounter{tocdepth}{1}

\centerline{\dedicatory{\emph{\large To Bj\"{o}rn Engquist with friendship and appreciation}}}

\bigskip
\tableofcontents
\section{Introduction}

We present (to our knowledge --- a first) systematic study  of multi-scale analysis for 
the large-time  behavior of collective dynamics.
Different scales of the dynamics are captured by different descriptions. Our starting point is an agent-based description of \emph{alignment dynamics} in which a crowd of $N$ agents, each with unit mass,  identified by (position, velocity) pairs  $(\bx_i(t), \bv_i(t)) \in \R^d\times \R^d$, are governed by
\begin{equation}\label{eq:ial}
\begin{split}
\dot{\bx}_i(t)&=\bv_i(t)\\
\dot{\bv}_i(t) &= \lambda\sum_{j\in {\calC}} \phi(\bx_i,\bx_j)(\bv_j(t)-\bv_i(t)), \qquad i\in {\mathcal C}:=\{1,2,\ldots, N\}.
\end{split}
\end{equation}
The alignment dynamics is  dictated by the \emph{symmetric} communication kernel $ \phi(\cdot,\cdot)\geq 0$.
It is tacitly assumed here that the initial configuration of the agents are  equi-distributed which justifies a scaling  factor $\lambda=1/N$, and  thus \eqref{eq:ial} amounts to the celebrated Cucker-Smale (CS) model \cite{CS2007a,CS2007b}. The tendency   to align  velocities leads to the generic large-time formation of a flock.\newline
In realistic scenarios, however, initial configurations are not equi-distributed. Indeed, fluctuations in initial density may admit different scales of spatial concentrations. What is the  collective behavior subject to such non-uniform initial densities? This is the main focus of our work. 

The presence of  different spatial scales leads to formation of separate flocks at different time scales, which are realized  by mixing  different formulations of alignment dynamics --- from agent-based to hydrodynamic descriptions. 
In section \ref{sec:multi} we make a systematic derivation, starting with the agent-based CS dynamics for a single flock  \eqref{eq:ial} and ending with  dynamics which involves several flocks ${\calC}_\a, \a=1, \ldots A$: the  $\a$-flock consists of $N_\a$ agents, identified by (position, velocity) pairs $\{(\bx_{\a i},\bv_{\a i})\}_{i\in {\calC}_a}$, which is one part of a total crowd  of size $N=\sum_{i=1}^A N_\a$. The resulting \emph{multi-flock} dynamics is governed by a `master-equation'
\begin{equation}\label{e:CS}
\left\{
\begin{split}
\dot{\bx}_{\a i}&=\bv_{\a i},\\
\dot{\bv}_{\a i}& = \l_\a  \sum_{j = 1}^{N_\a} m_{\a j}  \phi_\a(\bx_{\a i}, \bx_{\a j}) (\bv_{\a j}-\bv_{\a i}) + \e \sum_{\substack{\b=1 \\ \b \neq \a}}^{A} M_\b\psi(\bX_\a,\bX_\b) (\bV_{\b}-\bv_{\a i}).
\end{split}\right. 
\end{equation}

The system \eqref{e:CS}  arises naturally as an effective description for the alignment dynamics  with multiple spatial scaling, which in turn, yields multiple \emph{temporal} scalings. Such multi-scaling appears when each $\a$-flock undergoes evolution  on a time scale much shorter than relative evolution between the flocks. 
Accordingly, the dynamics in \eqref{e:CS} has two main parts. The first sum on the right encodes short-range alignment interactions among agents in flock $\a$, dictated by \emph{symmetric} communication kernel $\phi_\a$ with amplitude $\l_\a$. The new feature here is that  spatial variations in initial density require us to trace  the different masses $m_{\a j}$ attached to different agents located at $\bx_{\a j}$. The second sum on the right encodes the interactions between agents in flock $\alpha$ and the `remote' flocks $\beta\neq \alpha$. The communication is  dictated by  symmetric kernel $\psi$ with amplitude $\e$: since these are long-range interactions, they are scaled with  relatively weak amplitude $\e \ll 1$, and we therefore do not get into finer resolution of different kernels, $\psi_{\a\b}$, to different flocks (inter-flocking interactions driven by different $\psi_{\a\b}$  is the topic of a recent study on  \emph{multi-species} dynamics \cite{HeT2019}).  The new feature here is that  the remote flocks in these long-range interactions, $\calC_{\b\neq \a}$, are encoded in terms of their macroscopic `observables' --- their mass, $M_\b = \sum_{i \in \calC_\b} m_{\b i}$, and  centers of mass  and momentum
\[
\bX_\b  := \frac{1}{M_\b} \sum_{i \in \calC_\a} m_{\b i} \bx_{\b i}, \quad \bV_\b := \frac{1}{M_\b} \sum_{i \in \calC_\b} m_{\b i} \bv_{\b i}, \qquad M_\b := \sum_{i \in \calC_\b} m_{\b i}.
\]

\ifx
Denoting by $\l_\a$ the communication strength within the $\a$-flock, and by $\e$ the inter-flock strength, we introduce the following Cucker-Smale type multi-flock system:
To set the notation we consider  several flocks $\{\bx_{\a i}\}_{i=1}^{N_\a}$ indexed by $\a = 1,\ldots,A$, in which communication is governed by an inter-flock kernel $\psi$, and a family of inner-flock kernels $\phi_\a$'s.   Denoting $\dot{\bx}_{\a i} = \bv_{\a i}$ and assigning a ``mass" $m_{\a i}$  to each agent $\bx_{\a i}$  we consider the following macroscopic quantities:
\[
\begin{split}
\mbox{Mass of $\a$-flock /system:} &  \quad  , \quad M = \sum_{\a=1}^A M_{\a}\\
\mbox{Center of mass of $\a$-flock/system:} &  \quad , \quad \bX = 
\frac{1}{M} \sum_{\a} M_{\a} \bX_{\a} \\
\mbox{Total momentum of $\a$-flock/system:} &  \quad , \quad \bV = 
\frac{1}{M} \sum_{\a} M_{\a} \bV_{\a}.
\end{split}
\]
\fi

These macroscopic quantities $\{(\bX_\a,\bV_\a)\}$ are  determined  by the slow inetr-flocking dynamics:  a weighted sum 
$\sum_i m_{\a i}$\eqref{e:CS}${}_{i}$ yields   
\begin{equation}\label{e:CSmacro}
\left\{
\begin{split}
\dot{\bX}_{\a }&=\bV_{\a},\\
\dot{\bV}_{\a}& =\e \sum_{\b \neq \a} M_\b \psi(\bX_\a,\bX_\b)(\bV_{\b}-\bV_{\a}).
\end{split}\right. 
\end{equation}
Thus, starting with agent dynamics \eqref{eq:ial} we end up with the same classical Cucker-Smale dynamics \eqref{e:CSmacro} for `super-agents', weighted by their respective masses and representing macroscopic parameters of those flocks.
\begin{remark}({\bf Smooth and singular kernels}). In the case when the inter-flock and internal communication kernels are smooth, the global existence of the system \eqref{e:CS} follows by a trivial application of the Picard iteration  and continuation. If the kernels $\phi_\a$ are singular, however,  collisions lead to finite time blowup, so this case needs to be addressed separately. In the Appendix  we show that multi-flock dynamics 
 governed by  singular communication kernels with `fat-head' so that
$\int_0^1 \phi_\a(r) \dr = \infty$, experiences no internal collisions.
Consequently, one can deduce global existence for systems with smooth $\psi$ and a family of either smooth kernels or `fat-head' kernels. 
\end{remark}
 \subsection{Statement of main results}
Much of the  theory available in the literature on mono-scale flocking, e.g., \cite{BDT2017,BDT2019} and the references therein, admits proper extension  to the framework of multi-flocks. We chose to carry out proofs to three main aspects of (i) the large-time \emph{alignment} behavior of \eqref{e:CS}; (ii) multi-flocks in presence of additional  \emph{attractive forcing};  and (iii) large-crowd \emph{hydrodynamics} of multi-flocks. Below we highlight  the main results.
 
 We begin, in section \ref{sec:rate}, with the large-time alignment behavior of the multi-flock dynamics \eqref{e:CS}. We assume that the short- and long-range communication kernels $\phi_\a$ and $\psi$ are \emph{bounded} and \emph{fat-tailed} in the sense that\footnote{Here and below we abbreviate $\langle X \rangle := (1+|X|^2)^{1/2}$} 
 \begin{equation}\label{eq:ifattail} 
 \phi_\a(\bx,\by)\gtrsim \langle|\bx-\by|\rangle^{-\eta_\a}, \quad \psi(\bx,\by)\gtrsim \langle|\bx-\by|\rangle^{-\zeta}, \qquad \eta_a,\zeta\leq 1.
 \end{equation}
 They dictate the fast  alignment rates insides flocks and slow cross-flocks rates, summarized in the following two theorems.
 \begin{theorem}[{\bf Fast local flocking}]\label{t:fast}
Assume that the communication in an $\a$-flock has a fat-tailed kernel $\phi_\a(\bx,\by)\gtrsim \langle|\bx-\by|\rangle^{-\eta_\a}, \ \eta_\a\leq 1$. Then,  the diameter of the $\a$-flock
is uniformly bounded in time,  $\cD_\a(t) :=\max_{i,j}|\bx_{\a i}(t) - \bx_{\a j}(t)|\leq \overline{\cD}_{\a}$, and the $\a$-flock aligns  exponentially fast towards its center of momentum
\begin{equation}\label{eq:fast}
\max_{i}| \bv_{\a i}(t) - \bV_\a(t)| \lesssim e^{-\d_\a t}, \qquad  \d_\a=\l_\a M_\a (\overline{\cD}_{\a})^{-\eta_\a}.
\end{equation}
\end{theorem}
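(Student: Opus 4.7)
The plan is to work in the co-moving frame of the $\a$-flock. Setting the fluctuation $\bu_{\a i}:=\bv_{\a i}-\bV_\a$ and subtracting \eqref{e:CSmacro} from the velocity equation in \eqref{e:CS}, the two inter-flock forcings against $\bV_\b-\bv_{\a i}$ and $\bV_\b-\bV_\a$ cancel up to the difference $\bV_\a-\bv_{\a i}=-\bu_{\a i}$, and using $\bv_{\a j}-\bv_{\a i}=\bu_{\a j}-\bu_{\a i}$ one is left with the \emph{damped} Cucker--Smale system
$$\dot{\bu}_{\a i}=\l_\a\sum_j m_{\a j}\phi_\a(\bx_{\a i},\bx_{\a j})(\bu_{\a j}-\bu_{\a i})-\e\,\Psi_\a(t)\,\bu_{\a i},\qquad \Psi_\a(t):=\sum_{\b\neq\a}M_\b\,\psi(\bX_\a,\bX_\b)\geq 0.$$
This is the structural observation that drives the whole proof: the multi-flock coupling has turned into a \emph{sign-definite friction} on the fluctuations, which cannot harm alignment inside the $\a$-flock and will simply be discarded below.

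Next I would run the classical Ha--Liu/Ha--Tadmor pairwise-maximum argument, now phrased in terms of the $\bu_{\a i}$. Introduce the velocity diameter $\cV_\a(t):=\max_{i,j}|\bu_{\a i}-\bu_{\a j}|$ alongside the spatial diameter $\cD_\a(t)$, both of them Lipschitz in $t$. At an argmax pair $(i^*,j^*)$, writing $\bw:=\bu_{\a i^*}-\bu_{\a j^*}$, maximality forces $(\bu_{\a i^*}-\bu_{\a k})\cdot\bw\geq 0$ and $(\bu_{\a k}-\bu_{\a j^*})\cdot\bw\geq 0$ for every $k$ (just expand $|\bu_{\a k}-\bu_{\a j^*}|^2\leq|\bw|^2$ and its symmetric counterpart). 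Combining this with the fat-tail lower bound $\phi_\a(\bx_{\a i},\bx_{\a j})\geq\underline{\phi}_\a(\cD_\a)$, $\underline{\phi}_\a(r):=\langle r\rangle^{-\eta_\a}$, and discarding the non-positive friction $-\e\Psi_\a|\bw|^2$, a Rademacher differentiation at the argmax produces the coupled inequalities
$$\dot{\cD}_\a\leq\cV_\a,\qquad \dot{\cV}_\a\leq -\l_\a M_\a\,\underline{\phi}_\a(\cD_\a)\,\cV_\a.$$

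The closure is then a Lyapunov functional in the spirit of Ha--Tadmor: let $\underline{\Phi}_\a$ be any antiderivative of $\underline{\phi}_\a$, and observe that $\eta_\a\leq 1$ forces $\underline{\Phi}_\a(\infty)=\infty$. The pair of inequalities above shows at once that $\cV_\a(t)+\l_\a M_\a\underline{\Phi}_\a(\cD_\a(t))$ is non-increasing in $t$, and inverting $\underline{\Phi}_\a$ at the constant $\cV_\a(0)+\l_\a M_\a\underline{\Phi}_\a(\cD_\a(0))$ produces the uniform spatial bound $\cD_\a(t)\leq\overline{\cD}_\a$. Plugging this back gives $\dot{\cV}_\a\leq-\l_\a M_\a\langle\overline{\cD}_\a\rangle^{-\eta_\a}\cV_\a$, hence exponential decay of $\cV_\a$ at exactly the claimed rate $\d_\a$. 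To convert $\cV_\a$ into the quantity $\max_i|\bv_{\a i}-\bV_\a|=\max_i|\bu_{\a i}|$ appearing in \eqref{eq:fast}, note that $\sum_j m_{\a j}\bu_{\a j}=0$ by construction of $\bV_\a$, whence $\bu_{\a i}=M_\a^{-1}\sum_j m_{\a j}(\bu_{\a i}-\bu_{\a j})$ and therefore $\max_i|\bu_{\a i}|\leq\cV_\a$.

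The main obstacle is conceptual rather than computational: one must recognize \emph{before} doing any estimate that the inter-flock forcing in \eqref{e:CS}, which at first glance looks like an exogenous time-dependent perturbation driven by remote, slowly evolving flocks, actually collapses into a pure non-negative friction on $\bu_{\a i}$ once the common velocity $\bV_\a$ is subtracted. This is what lets the slow inter-flock dynamics be completely ignored for internal flocking purposes, reducing the problem to a weighted single-flock Cucker--Smale analysis. The only genuine technical caveat along the way is the standard Rademacher-type justification for differentiating the non-smooth maxima $\cD_\a,\cV_\a$, which is handled in the usual way by restricting to times and argmax pairs where the functions are differentiable.
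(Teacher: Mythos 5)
Your proof is correct and follows essentially the same route as the paper: pass to the co-moving frame $\bw_{\a i}=\bv_{\a i}-\bV_\a$ so the inter-flock coupling becomes a non-negative friction $-\e R_\a(t)\bw_{\a i}$ that can be discarded, run the Ha--Liu pairwise-maximum argument on the velocity diameter at an argmax pair, and close with the Lyapunov functional $\cA_\a+\l_\a M_\a\int^{\cD_\a}\phi_\a$ whose divergence at infinity (from $\eta_\a\le 1$) bounds $\cD_\a$ and yields the exponential rate. Your final reduction $\max_i|\bw_{\a i}|\le\cA_\a$ via $\sum_j m_{\a j}\bw_{\a j}=0$ is a small step the paper leaves implicit, but otherwise the two arguments coincide.
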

The main message of this theorem is that the $\a$-flock alignment towards $\bV_\a$   depends only  on the $\a$-flock  own parameters, but not the global values.  
The  global alignment has a slow(-er) rate  reflecting weaker communication due to the smaller amplitude $\e$ and the global diameter of the multi-flock $\cD$.
Let $\bV$ denote the  center of momentum of the whole crowd,
$ \bV:=\frac{1}{M} \sum_{\a} M_{\a} \bV_{\a}(t)$, and observe that it is time invariant.
\begin{theorem}[{\bf Slow global flocking}]\label{t:slow}
Suppose $\psi$ has a fat tail, $\psi(\bx,\by)\gtrsim \langle|\bx-\by|\rangle^{-\zeta}, \zeta\leq 1$. Then the diameter of the whole crowd is uniformly bounded in time, $\cD(t): = \max_{\a,\b}|\bX_{\a}(t) - \bX_{\b}(t)| \leq \overline{\cD}$, and solutions of \eqref{e:CS} globally align with the global center of momentum $\bV$,
\begin{equation}\label{eq:slow}
\max_{\a, i}| \bv_{\a i}(t) - \bV| \lesssim e^{-\d t}, \qquad \d ={\e M}{(\overline{\cD})^{-\zeta}}.
\end{equation}
\end{theorem}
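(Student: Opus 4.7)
The plan is to observe that the macroscopic subsystem \eqref{e:CSmacro} is itself a classical symmetric Cucker--Smale system for the $A$ super-agents $\{(\bX_\a,\bV_\a)\}_{\a=1}^A$, with masses $M_\a$, coupling strength $\mu$, and symmetric fat-tailed kernel $\psi$. Theorem~\ref{t:slow} then reduces to (a) proving uniform flocking for this super-agent system, and (b) combining with the intra-flock statement of Theorem~\ref{t:fast} via the triangle inequality. As a preliminary step, the symmetry of $\psi$ and a simple index swap in $M\dot{\bV}=\sum_\a M_\a\dot{\bV}_\a$ give $\dot{\bV}=0$, confirming that $\bV$ is a well-defined invariant of the dynamics.

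For the macroscopic alignment I would work directly with the Lipschitz, Dini-differentiable functionals $\cD(t)=\max_{\a,\b}|\bX_\a-\bX_\b|$ and $\cV_*(t)=\max_{\a,\b}|\bV_\a-\bV_\b|$, following the Ha--Liu approach. At each time $t$, pick an extremal pair $(\a^*,\b^*)$ realizing $\cV_*(t)$ and differentiate $\tfrac12|\bV_{\a^*}-\bV_{\b^*}|^2$: the symmetry of $\psi$ lets one rewrite the resulting sum symmetrically over $\gamma$, and the extremality of the pair $(\a^*,\b^*)$ implies that each cross-term $(\bV_{\a^*}-\bV_{\b^*})\cdot(\bV_\gamma-\bV_{\a^*})$ and its counterpart are nonpositive. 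This gives the key differential inequality $\tfrac{d}{dt}\cV_*\le -\mu M\,\langle\cD\rangle^{-\zeta}\cV_*$, paired with the trivial $|\tfrac{d}{dt}\cD|\le\cV_*$. Since $\int^{\infty}\langle r\rangle^{-\zeta}\dr=\infty$ for $\zeta\le 1$, the standard bootstrap on the Lyapunov functional $\mu M\int_0^{\cD(t)}\langle r\rangle^{-\zeta}\dr+\cV_*(t)$ yields a uniform bound $\cD(t)\le\overline{\cD}$ depending only on the initial data, and consequently the exponential decay $\cV_*(t)\lesssim e^{-\delta t}$ with $\delta=\mu M\langle\overline{\cD}\rangle^{-\zeta}$. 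Because $\bV$ is a convex combination of the $\bV_\a$, one has $|\bV_\a-\bV|\le\cV_*(t)$, delivering the `super-agent' half of \eqref{eq:slow}.

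To conclude, I would invoke Theorem~\ref{t:fast}: each $\a$-flock satisfies $|\bv_{\a i}-\bV_\a(t)|\lesssim e^{-\delta_\a t}$ with a bounded intra-flock diameter $\overline{\cD}_\a$. A single triangle inequality gives
\[
|\bv_{\a i}-\bV|\;\le\;|\bv_{\a i}-\bV_\a|+|\bV_\a-\bV|\;\lesssim\; e^{-\delta_\a t}+e^{-\delta t}\;\lesssim\;e^{-\delta t},
\]
where the last step uses the multi-scale regime $\mu\ll\lambda_\a$ that makes the whole analysis meaningful: the slow rate $\delta$ dominates (and if for some $\a$ it did not, one could simply absorb the faster decay without harm).

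The main obstacle is step two: the Ha--Liu maximum-principle bootstrap. One needs to handle the non-differentiability of $\cV_*$ and $\cD$ at times when the extremal pair changes, and to track the signs of the $(\bV_\gamma-\bV_{\a^*})$ cross-terms carefully enough to obtain a clean linear dissipation rate. This is standard but delicate, and relies crucially on the \emph{symmetry} of $\psi$ and on the fat-tail assumption $\zeta\le 1$ so that the accumulated diameter bound $\int_0^{\cD(t)}\psi_{\min}\dr$ can absorb the entire initial velocity spread.
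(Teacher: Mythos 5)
Your treatment of the macroscopic half is correct and coincides with the paper's: the super-agent system \eqref{e:CSmacro} is a symmetric Cucker--Smale system, the Ha--Liu Lyapunov bootstrap on the pair $(\cD,\cA)$ with $\cA=\max_{\a,\b}|\bV_\a-\bV_\b|$ gives $\cD(t)\le\overline{\cD}$ and $\cA(t)\lesssim e^{-\d t}$, $\d=\e M\psi(\overline{\cD})$, and $|\bV_\a-\bV|\le\cA$ since $\bV$ is a convex combination of the $\bV_\b$.

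The gap is in the second half, where you pass from $\bV_\a$ to the individual agents by invoking Theorem~\ref{t:fast} plus a triangle inequality. First, Theorem~\ref{t:slow} assumes only that $\psi$ is fat-tailed; the point the paper stresses is that global alignment ensues \emph{even if the internal communications $\phi_\a$ are completely absent}, so appealing to Theorem~\ref{t:fast} imports a hypothesis that is not available. Second, even when every $\phi_\a$ is fat-tailed, the triangle inequality only yields the rate $\min(\d,\min_\a\d_\a)$: if some flock has $\d_\a<\d$ (weak internal coupling $\l_\a$, or a large $\overline{\cD}_\a$), the term $e^{-\d_\a t}$ decays \emph{slower} than $e^{-\d t}$ and cannot be absorbed --- your parenthetical fallback runs in the wrong direction, and the regime $\e\ll\l_\a$ that would rescue the comparison is a modelling motivation, not a hypothesis of the theorem. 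The missing idea, which is how the paper argues, is that in the shifted frame \eqref{e:CSshift} the inter-flock coupling itself contributes the damping term $-\e R_\a(t)\bw_{\a i}$ with $R_\a(t)\gtrsim M\psi(\cD)\ge M\psi(\overline{\cD})$, so the internal fluctuation $\cA_\a=\max_{i,j}|\bw_{\a i}-\bw_{\a j}|$ satisfies $\dot{\cA}_\a\le-\e M\psi(\overline{\cD})\,\cA_\a$ with no reference to $\phi_\a$ whatsoever; hence $|\bv_{\a i}-\bV_\a|\lesssim e^{-\d t}$ already at the \emph{slow} rate, and only then does the triangle inequality close the argument with the claimed uniform rate $\d=\e M(\overline{\cD})^{-\zeta}$.
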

As a consequence of the two theorems above we obtain what is called ``strong flocking", that is when all the displacements between agents stabilize,
$\bx_{\a i}(t) - \bx_{\a j}(t) \to \bar{\bx}_{\a ij}$  as $t \to \infty$.
\newline
Indeed, 
$\displaystyle 
\bx_{\a i}(t) - \bx_{\a j}(t)  = \bx_{\a i}(0) - \bx_{\a j}(0) + \int_0^t [ \bv_{\a i}(s) - \bv_{\a j}(s) ] \ds$, hence 
\[
\bar{\bx}_{\a ij} =  \bx_{\a i}(0) - \bx_{\a j}(0) + \int_0^\infty [ \bv_{\a i}(s) - \bv_{\a j}(s) ] \ds,
\]
and the rate of convergence is obviously the same as that claimed for the velocities.

\smallskip
In section \ref{sec:attr} we study the multi-flock dynamics \eqref{e:CS} with additional \emph{attractive forcing} (here we restrict attention to interactions determined by a radially symmetric kernels)
\begin{equation}\label{e:iCSattr}
 \left\{
\begin{split}
\dot{\bx}_{\a i}&=\bv_{\a i},\\
\dot{\bv}_{\a i}& = \frac{1}{N_\a} \sum_{j = 1}^{N_\a}  m_{\a j} \phi_\a(\radial{\bx_{\a i}- \bx_{\a j}}) (\bv_{\a j}-\bv_{\a i}) + \e \sum_{\substack{\b=1 \\ \b \neq \a}}^{A} M_{\b}\psi(\radial{\bX_\a-\bX_\b}) (\bV_{\b}-\bv_{\a i}) + \bF_{\a i}.
\end{split}\right. 
\end{equation}
Here,  $\bF_{\a i}(t)= - \frac{1}{N_\a} \sum_{j=1}^{N_\a} \n U(\radial{\bx_{\a i} - \bx_{\a j}})$ is an external attractive forcing   induced by a convex potential  $U$ which belongs to the class of potentials outlined in \eqref{e:Uattr} below.  Arguing along the lines of \cite{ST2019b} we prove the following (the detailed result is outlined in \thm{t:attr} below).

\begin{theorem}[{\bf Local flocking with attraction potential}]\label{t:iattr}
	Consider the multi-flock dynamics \eqref{e:iCSattr} with fat-tailed radial kernels, $\phi_\a(r)\gtrsim \langle r \rangle^{-\eta}$ and convex potential $U(r) \gtrsim r^\beta$ with tamed growth $U^{(k)}(r) \lesssim r^{\b-k}, \ k=1,2$, for some $\beta\geq 1$ (further outlined in \eqref{e:Uattr} below). There exists $\eta_\b$ specified in \eqref{e:gb}, such that for $\eta\leq \eta_\b$,  the dynamics of each flock admits  asymptotic aggregation, 
	$	\limsup_{t \to \infty} \cD_\a(t) \leq L$,
	and alignment decay
	\[
	\frac{1}{2N_\a}\sum_{i=1}^{N_\a} |\bv_{\a i}-\bV_\a|^2 \lesssim \frac{C_\d}{\lan t \ran^{1-\d}}, \quad \forall \d > 0, \qquad \a=1,2, \ldots, A.
	\]
\end{theorem}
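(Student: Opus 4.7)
The plan is to reduce the multi-flock attraction problem \eqref{e:iCSattr} to a mono-flock problem for each $\a$-flock by working in fluctuation coordinates around $(\bX_\a,\bV_\a)$ and absorbing the inter-flock coupling as a sign-definite damping term. The core of the argument then transplants the Lyapunov strategy of \cite{ST2019b}. First I would pass to fluctuations $\tilde\bv_{\a i}(t):=\bv_{\a i}(t)-\bV_\a(t)$. Subtracting from \eqref{e:iCSattr} the $m_{\a i}$-weighted macroscopic equation for $\bV_\a$, the intra-flock alignment retains its CS form, while the inter-flock contribution becomes $-\e\Psi_\a(t)\tilde\bv_{\a i}$ with $\Psi_\a(t):=\sum_{\b\neq \a} M_\b\psi(|\bX_\a-\bX_\b|)\ge 0$. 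By \thm{t:slow} the global diameter $\cD(t)\leq \overline{\cD}$ is uniformly bounded, so $\Psi_\a(t)$ stays bounded and non-negative, and thus the inter-flock term can only enhance internal alignment.

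Second, I would introduce the natural energy functional combining mass-weighted kinetic fluctuation and pairwise configurational potential,
\[
\cE_\a(t)=\frac{1}{2}\sum_i m_{\a i}|\tilde\bv_{\a i}|^2+\frac{1}{2N_\a}\sum_{i,j} U(|\bx_{\a i}-\bx_{\a j}|),
\]
and verify, via the standard $i\leftrightarrow j$ symmetrization exploiting the symmetry of $\phi_\a$ and the radial structure of $U$, the dissipation identity
\[
\ddt\cE_\a(t)=-\frac{1}{2N_\a}\sum_{i,j}m_{\a i}m_{\a j}\phi_\a(|\bx_{\a i}-\bx_{\a j}|)|\tilde\bv_{\a i}-\tilde\bv_{\a j}|^2-\e\Psi_\a(t)\sum_i m_{\a i}|\tilde\bv_{\a i}|^2\leq 0,
\]
modulo harmless corrections from non-uniform masses that can be absorbed into the diameter bookkeeping. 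Monotonicity of $\cE_\a$ combined with the coercivity $U(r)\gtrsim r^\b$ then converts boundedness of $\cE_\a(0)$ into the asymptotic aggregation $\limsup_{t\to\infty}\cD_\a(t)\leq L$, with $L=L(\cE_\a(0),\b)$.

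Third, once $\cD_\a(t)\leq L$ is in hand, the fat-tail $\phi_\a(r)\gtrsim\lan r\ran^{-\eta}$ furnishes a uniform lower bound $\phi_\a\geq c(L)>0$ on the $\a$-flock support, and integrating the dissipation identity yields the $L^1_t$ estimate $\int_0^\infty\sum_i m_{\a i}|\tilde\bv_{\a i}(t)|^2\,\dt<\infty$. To upgrade this integrability into the pointwise polynomial rate $\lan t\ran^{-(1-\d)}$, I would follow \cite{ST2019b} and introduce a weighted Lyapunov correction of the form $\lan t\ran^{1-\d}$ multiplying a properly chosen fraction of the configurational potential, then close the estimate via a Barbalat-type argument that uses the tamed growth $U^{(k)}(r)\lesssim r^{\b-k}$ to control second-order time derivatives of the kinetic fluctuation.

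The main obstacle is this last polynomial-decay step. Exponential relaxation is unavailable because the convex confinement $U$ sustains persistent energy exchange between the kinetic and potential parts of $\cE_\a$, so one must interpolate between them; the threshold $\eta\leq \eta_\b$ of \eqref{e:gb} is precisely the compatibility condition between the polynomial coercivity $r^\b$ of $U$ and the degeneracy $\lan r\ran^{-\eta}$ of $\phi_\a$ under which the weighted Lyapunov correction closes. A secondary subtlety is that $\bV_\a$ is itself driven by the inter-flock forcing, so strictly $\tilde\bv_{\a i}$ does not decouple across flocks --- however, \thm{t:slow} implies $\bV_\b-\bV_\a$ decays exponentially, feeding into the $\cE_\a$ balance only as an integrable source that does not disturb the claimed polynomial rate.
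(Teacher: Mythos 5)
Your opening reduction (shifted frame, exact damping $-\e R_\a(t)\bw_{\a i}$, energy dissipation law) matches the paper, but there is a genuine gap at the step where you claim that monotonicity of $\cE_\a$ plus the coercivity $U(r)\gtrsim r^\b$ yields $\limsup_{t}\cD_\a(t)\leq L$. First, $\cP_\a$ is an \emph{average} over pairs, so a bound on $\cP_\a$ controls the diameter only up to a factor $N_\a^{2/\b}$ (a single distant pair contributes $O(N_\a^{-2})$ to $\cP_\a$), while the theorem asserts bounds independent of $N_\a$. Second, monotonicity gives $\cE_\a(t)\leq\cE_\a(0)$ but not $\cP_\a\to 0$, so it cannot force the flock into the dead zone $\{U=0\}=\{r\leq L\}$; note also that $L$ in the statement is the fixed support radius of $U$, not a function of $\cE_\a(0)$. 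The paper instead works with the non-monotone \emph{particle} energy $\cE_{\a i}=\tfrac12|\bw_{\a i}|^2+\tfrac1{N_\a}\sum_kU(\radial{\by_{\a ik}})$, whose maximum controls $\cD_\a$ by convexity, and runs a bootstrap: (i) a priori polynomial growth $\cD_\a\lesssim\lan t\ran^{d(\b)}$ (this is where the three regimes of $\b$ in \eqref{e:gb} originate); (ii) energy decay with a rate; (iii) uniform boundedness of $\cD_\a$; and only then (iv) aggregation.

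The decay step is also underpowered as written. Integrating the dissipation identity and invoking Barbalat gives $\cK_\a\to 0$ with \emph{no rate}; the claimed $\lan t\ran^{-(1-\d)}$ comes from a hypocoercivity corrector, namely the virial cross term $\cX_\a=\frac1{N_\a}\sum_i\by_{\a i}\cdot\bw_{\a i}$ weighted by $\eta(t)=\lan t\ran^{-a}$, whose time derivative produces the coercive term $-\cP_\a$ through convexity ($rU'(r)\geq U(r)$). A corrector proportional to ``a fraction of the configurational potential'' does not generate this coercivity. Moreover, in your scheme the diameter is already bounded before the decay argument, so $\phi_\a\geq c(L)>0$ and the restriction $\eta\leq\eta_\b$ of \eqref{e:gb} would never be needed --- a sign that the order of the argument is wrong: in the paper, \eqref{e:gb} is precisely the condition $\g\, d(\b)<1$ ensuring that the time-decaying alignment rate $\phi_\a(\cD_\a(t))\gtrsim\lan t\ran^{-\g d}$ is still strong enough to close the weighted Gronwall estimate. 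Finally, your ``secondary subtlety'' is a non-issue: subtracting the macroscopic equation shows the inter-flock term is \emph{exactly} $-\e R_\a(t)\bw_{\a i}$ with no residual source, and $R_\a$ is bounded because $\psi$ is; no appeal to Theorem \ref{t:slow} (and hence no fat-tail assumption on $\psi$, which the theorem does not make) is required.
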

It should be emphasized that the confining action of the attraction potential is assumed  to act only on far-field,  $r>L$, but otherwise is allowed to be `turned-off' for $U(r)=0, \ r\leq L$ as depicted in figure \ref{f:AA}.This offers an extension of the recent result \cite{ST2019a} for the case $L=0$. In fact, as noted in \thm{t:aggr} below, if the potential $U$ has a global support, then there is \emph{exponential rate} alignment.

\begin{figure}
			\includegraphics[width=4in]{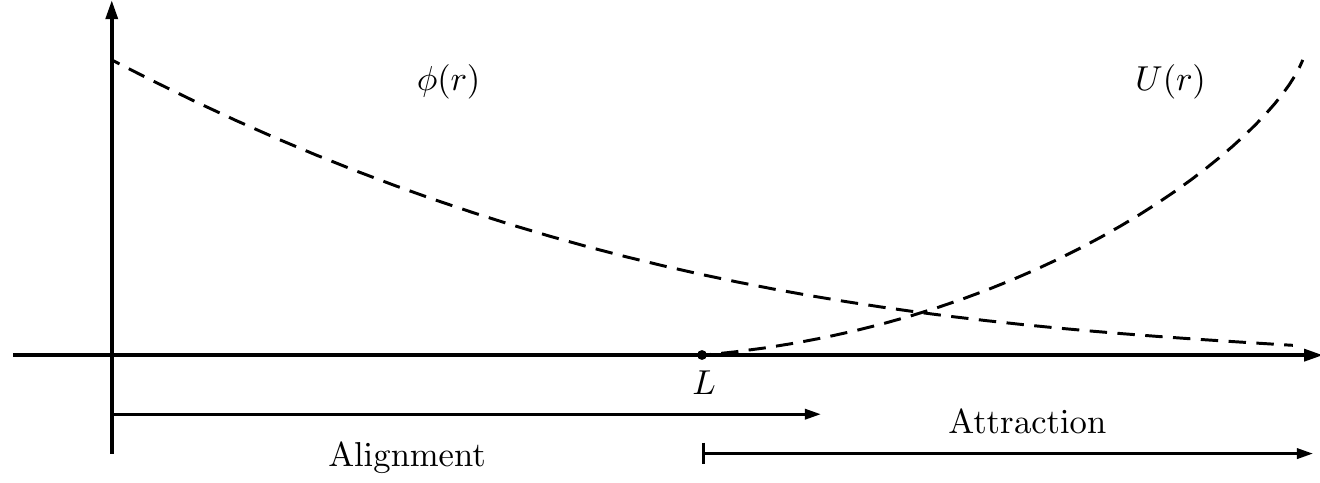}
		\caption{2-zone Attraction-Alignment model}\label{f:AA}
\end{figure}

When $N_\a \gg1$ one recovers the large-crowd dynamics in terms of the macroscopic density and velocity $(\rho_a,\bu_\a)$, governed by the hydrodynamic multi-flock system, which is the topic of section \ref{sec:hydro}
\[
\left\{
\begin{split}
\p_t \rho_\a + \n \cdot (\bu_\a \rho_\a) & = 0\\
\p_t {\bu}_{\a} + \bu_\a \cdot \n \bu_\a & =\l_\a \int_{\R^d} \phi_\a(\bx,\by)(\bu_\a(\by) - \bu_\a(\bx)) \rho_\a(\by) \dby  \\
&\ \ \ + \e \sum_{\b \neq \a} M_\b \psi(\bX_\a ,\bX_\b)(\bV_\b - \bu_\a(\bx,t)),
\end{split}\right. \qquad  \a = 1,\ldots,A.
\]
Here $\{(\bX_\a,\bV_\a)\}_\a$ are the macroscopic quantities which record the center of mass and momentum of $\a$-flock governed by \eqref{e:CSmacro}. The alignment dynamics reflects the discrete framework of Theorems \ref{t:fast} and \ref{t:slow}, namely --- if $\phi_\a$ and $\psi$ are fat-tailed then
 smooth solutions of the $\a$-flock and, respectively, the whole crowd will align towards their respective averages. The details can be found in \thm{t:hydroalign} below. In particular,  we prove that  the 
1D multi-flock hydrodynamics with  radial $\phi_\a$'s ---  either smooth or singular, and subject to sub-critical initial condition
$u'_\a(x,0) + \l_\a \phi_\a \ast \rho_\a(x,0)\geq 0$, $\forall x \in \R$,  admits  global smooth solution and flocking insues.\newline

\section{From agents to multi-flocks and back: up-scaling}\label{sec:multi}
\subsection{Agent-based description} Our starting point is the alignment-based dynamics \eqref{eq:ial}
\begin{equation}\label{eq:al}
\begin{split}
\dot{\bx}_i(t)&=\bv_i(t)\\
\dot{\bv}_i(t) &= \sum_{j\in {\calC}} \phi(\bx_i,\bx_j)(\bv_j(t)-\bv_i(t)), \qquad i\in {\calC}:=\{1,2,\ldots, N\}.
\end{split}
\end{equation}
This expresses the tendency of agents  to align their velocities with the rest of the crowd, dictated by the \emph{symmetric} communication kernel $ \phi(\cdot,\cdot)\geq 0$. Let us assume that each of the terms on the right is  of the same   order, ${\mathcal O}(1)$;  then the total action on the right of order ${\mathcal O}(N)$ will peak at time $t={\mathcal O}(1/N)$. Using the   scaling parameter $\lambda=1/N$, one arrives at the celebrated Cucker-Smale model \cite{CS2007a,CS2007b}
\[
\dot{\bv}_i = \lambda\sum_{j\in {\calC}} \phi(\bx_i,\bx_j)(\bv_j-\bv_i), \qquad \lambda=\frac{1}{N},
\]  
where  the dynamics is re-scaled to peak at the desired  $t \sim {\mathcal O}(1)$. But what happens when the terms on the right of \eqref{eq:al} are of different order?  Assume that the crowd consists of two mostly separated groups, 
${\calC}={\calC}_1\cup {\calC}_2$, where ${\calC}_1$ has a large crowd of $N_1$ agents whereas ${\calC}_2$ has a much smaller crowd of $N_2 \ll N_1$ agents. By `mostly separated' we mean that the two groups have a very low level of communication so that $\{\phi(\bx_i,\bx_j) \ll 1 \ | \ (\bx_i,\bx_j)\in ({\calC}_1,{\calC}_2)\}$. We will quantify   a precise statement of separation in section \ref{sec:space} below.
Now the dynamics \eqref{eq:al} will experience two-time scales: the action of the larger crowd  ${\calC}_1$ will peak  earlier at time $t_1={\mathcal O}(1/N_1)$, mostly ignoring the negligible effect of the `far way' crowd in ${\calC}_2$. The crowd of ${\calC}_2$ will peak \emph{much later} at time $t_2={\mathcal O}(1/N_2) \gg t_1$. In \cite{MT2011} we suggested  an adaptive scaling parameter
\[
\dot{\bv}_i = \lambda_i\sum_{j\in {\calC}} \phi(\bx_i,\bx_j)(\bv_j-\bv_i), \qquad \lambda_i=\frac{1}{\sum_j \phi(\bx_i,\bx_j)},
\]  
here, $\lambda_i$  adapts itself to the different clocks of both crowds: when in the larger crowd $i\in {\calC}_1$, we have $\lambda_i \sim 1/N_1$ whereas for agents in the smaller crowd $i\in {\calC}_2$ we have $\lambda_i \sim 1/ N_2$
\[
\dot{\bv}_i = \left\{\begin{array}{ll}
\lambda_i\sum_{j\in {\calC}_1} \phi(\bx_i,\bx_j)(\bv_j-\bv_i),  & \displaystyle i\in {\calC}_1: \  \lambda_i=\frac{1}{\sum_j \phi(\bx_i,\bx_j)} \sim \frac{1}{N_1}\\
\lambda_i\sum_{j\in {\calC}_2} \phi(\bx_i,\bx_j)(\bv_j-\bv_i),  & \displaystyle i\in {\calC}_2: \  \lambda_i=\frac{1}{\sum_j \phi(\bx_i,\bx_j)} \sim \frac{1}{N_2}. \end{array}\right.
\] 
Thus,  $\lambda_i$ should be viewed as \emph{time scaling} adapted for  both crowds to peak at the desired $t={\mathcal O}(1)$. While this scaling is satisfactory for ${\calC}_1$, it neglects taking into account that the activity of the smaller  ${\calC}_2$ peaks much  later after the peak of the larger crowd ${\calC}_1$, which has an additional effect on the dynamics of ${\calC}_2$. 
\subsection{Scale separation in time} We want to take both groups into account while being precise of using the same `clock'. To this end, it will be convenient to observe the  configurations of crowds ${\calC}_1$  and ${\calC}_2$  in terms of their empirical distribution 
\[
\mu_{{}_{1}}(\bx,\bv,t):=\frac{1}{N_1}\sum_{k\in {\calC}_1} \delta_{\bx_k(t)}(\bx)\otimes \delta_{\bv_k(t)}(\bv), \quad 
\mu_{{}_{2}}(\bx,\bv,t):=\frac{1}{N_2}\sum_{k\in {\calC}_2} \delta_{\bx_k(t)}(\bx)\otimes \delta_{\bv_k(t)}(\bv).
\]
We distinguish between three time scales.

\medskip\noindent
(i) Time $t\lesssim t_1$. The dynamics is captured by the agent-based description
of the two separate groups which form the crowd ${\calC}$ in \eqref{eq:al}.

\medskip\noindent
(ii) Time $t_1 \ll t\lesssim t_2$. Since $t_2 \gg t_1$, crowd   ${\calC}_1$ is captured by its large-time dynamics which is realized  as a continuum with macroscopic density  $\mu_{{}_{1}}(\bx,\bv,t)\dbv \stackrel{N_1\gg1 }{\longrightarrow} \rho_1(\bx,t): \R^d\times \R_+ \rightarrow \R_+$, and  momentum $\mu_{{}_{1}}(\bx,\bv,t)\bv\dbv \stackrel{N_1\gg1}{\longrightarrow} (\rho_1\bu_1)(\bx,t):\R^d\times \R_+ \mapsto \R^d$. 
Observe that the dynamics at this stage  involves two groups  with two different descriptions: crowd ${\calC}_1$ is encoded in terms of its hydrodynamic observables, $(\rho_1,\rho_1\bu_1)$, while  crowd ${\calC}_2$ is still encoded in terms of its agent-based description
\[
\rho(\by,t) = \rho_1(\by,t) + \overbrace{\frac{1}{N_2} \sum_{k\in {\calC}_2} \delta_{\bx_k(t)}(\by)}^{\rho_2(\by,t)}, \qquad \rho\bu(\by,t) = \rho_1\bu_1(\by,t) + \overbrace{\frac{1}{N_2} \sum_{k\in {\calC}_2}\bv_k(t)\delta_{\bx_k(t)}(\by)}^{\rho_2\bu_2(\by,t)}.
\]
The large-time dynamics of ${\calC}_1$ is governed by the hydrodynamic system \cite{HT2008, CFTV2010}
\refstepcounter{equation}
\begin{equation}\label{e:main}\tag*{(\theequation)$_{1}$}
\ \ \ \ \ \ \left\{
\begin{split}
(\rho_1)_t + \nabla_\bx \cdot (\rho_1 \bu_1) & = 0, \\
(\rho_1\bu_1)_t + \nabla_\bx\cdot (\rho_1\bu_1 \otimes \bu_1 + P_1) &= \int_{\R^n}\!\!\!\phi(\bx,\by)\big\{(\rho\bu)(\by,t)\rho_1(\bx,t) - \rho(\by,t)(\rho_1\bu_1)(\bx,t))\big\}\dby,
\end{split}\right. 
\end{equation}
while crowd  ${\mathcal C}_2$ is governed by the agent-based description \eqref{eq:al} which takes the weak formulation
\begin{equation}\label{e:nain}\tag*{(\theequation)$_{2}$}
\ \ \ \ \ \ \left\{
 \begin{split}
(\rho_2)_t + \nabla_\bx \cdot (\rho_2 \bu_2) & = 0, \\
(\rho_2\bu_2)_t + \nabla_\bx\cdot (\rho_2\bu_2 \otimes \bu_2 + P_2) &= \int_{\R^n}\!\!\!\phi(\bx,\by)\big\{(\rho\bu)(\by,t)\rho_2(\bx,t) - \rho(\by,t)(\rho_2\bu_2)(\bx,t))\big\}\dby.
\end{split}\right. 
\end{equation}
Here, $P_1=P(\bv-\bu_1\otimes \bv-\bu_1)$ is a second-order fluctuations pressure tensor which requires a closure relations between the microscopic and macroscopic variables. We shall not dwell on its specific form:  the large time behavior of ${\mathcal C}_1$ in \ref{e:main}  is \emph{independent} of the specifics  of this closure. It will suffice to observe the center of mass and average velocity of crowd ${\mathcal C}_1$:
\[
\bX_1(t):= \frac{1}{M_1}\int_{{\mathcal S}_1}\bx\rho_1(\bx,t)\dbx,
\qquad \bV_1(t):= \frac{1}{M_1}\int_{{\mathcal S}_1}\rho_1(\bx,t)\bu_1(\bx,t)\dbx, \quad {\mathcal S}_1:=\text{supp}\{\rho_1(t,\cdot)\}.
\]
Integrating \ref{e:main} over the support of the first crowd ${\mathcal S}_1$: since the `self-'-alignment of ${\mathcal C}_1$ with itself  vanishes for $\by\in {\mathcal S}_1$, we are left with the contribution from the second crowd $\rho(\by,t) \mapsto \rho_2=\frac{1}{N_2} \sum_{k\in {\mathcal C}_2} \delta_{\bx_k(t)}(\by)$ and $(\rho\bu)(\by,t) \mapsto \rho_2\bu_2= \frac{1}{N_2} \sum_{k\in {\mathcal C}_2}\bv_k(t)\delta_{\bx_k(t)}(\by)$, which yields  
\[
\begin{split}
\dot{\bX}_1&=\bV_1\\
M_1\dot{\bV}_1& = \int_{\bx\in {\mathcal S}_1}\int_{\by\in {\mathcal S}_2}\phi(\bx,\by)\big\{(\rho_2\bu_2)(\by,t)\rho_1(\bx,t) - \rho_2(\by,t)(\rho_1\bu_1)(\bx,t))\big\}\dby\dbx \\
& =  \frac{1}{N_2} \sum_{j\in{\mathcal C}_2} \bv_{2j}(t) \int_{\bx\in {\mathcal S}_1}  \phi(\bx,\bx_{2j})\rho_1(\bx,t)\dbx - \frac{1}{N_2}\sum_{j\in{\mathcal C}_2}\int_{\bx\in {\mathcal S}_1}\phi(\bx,\bx_{2j})  (\rho_1\bu_1)(\bx,t)\dbx
\end{split}
\]
Due to assumed relatively large separation between the flocks, we can  approximate the last two integrals by the values of the kernel integrands at the centers of mass:
\begin{equation}\label{eq:approx1}
	\left\{
	\begin{split}
		\int_{\bx\in {\mathcal S}_1}  \phi(\bx,\bx_{2j})\rho_1(\bx,t)\dbx & =: \mu\psi(\bX_1,\bX_2)M_1, \\
		 \int_{\bx\in {\mathcal S}_1}\phi(\bx,\bx_{2j})  (\rho_1\bu_1)(\bx,t)\dbx & =: \mu\psi(\bX_1,\bX_2)M_1\bV_1,
	\end{split} \right. \qquad \mu\psi(\bX,\bY)\approx \phi(\bX,\bY), \ \ \mu\ll 1
\end{equation}
obtaining
\begin{subequations}\label{eqs:C12t2}
	\begin{equation}\label{eq:C1t2}
	\left\{
	\begin{split}
	\dot{\bX}_1(t)&=\bV_1(t)\\
	\dot{\bV}_1(t)& = \mu\psi(\bX_1,\bX_2)\big( \bV_2(t)-\bV_1(t)\big), \quad \bV_2(t)=\frac{1}{M_2}\int \rho_2\bu_2(\bx,t)\dbx= \frac{1}{N_2}\sum_{j\in {\mathcal S}_2}\bv_{2j}(t).
	\end{split}\right.
	\end{equation}
	For the dynamics of the second group ${\mathcal C}_2$ we may take $P_2\equiv 0$ on the left of \ref{e:nain}.  The cross-group interactions term $(\rho,\rho\bu)\mapsto (\rho_1,\rho_1\bu_1)$ on the right of \ref{e:nain} yields
	\[
	\int_{\by\in {\mathcal S}_1}  \phi(\bx_{2j},\by)\rho_1(\by,t)\dby = \mu\psi(\bX_2,\bX_1)M_1, \quad \int_{\by\in {\mathcal S}_1}\phi(\bx_{2j},\by)  (\rho_1\bu_1)(\by,t)\dby = \mu\psi(\bX_2,\bX_1)M_1\bV_1,
	\]
	arriving at 
	\begin{equation}\label{eq:C2t2}
	\left\{\quad
	\begin{split}
	\dot{\bx}_{2i}&=\bv_{2i}\\
	\dot{\bv}_{2i} &= \sum_{j\in {\mathcal C}_2} \phi(\bx_{2i},\bx_{2j})(\bv_{2j}-\bv_{2i})
	+ \mu\psi(\bX_2,\bX_1)M_1(\bV_1-\bv_{2i}).
	\end{split}\right.
	\end{equation}
\end{subequations}

Thus,  we end up with a new agent-based dynamics, \eqref{eq:C2t2}, in which the dynamics of group ${\mathcal C}_1$  is encoded as  new agent governed by mean position $\bX_1$ and  a mean   velocity $\bV_1$. This is a precisely the system \eqref{e:CS} written for the smaller flock ${\mathcal C}_2$.

\medskip\noindent
(iii) Time $t\gg t_2$. Now the second crowd  ${\mathcal C}_2$ is also captured by its large-time dynamics,   realized  in terms of macroscopic density  $\mu_{2}(\bx,\bv,t)\dbv \rightarrow \rho_2(\bx,t): \R^d\times \R_+ \mapsto \R_+$, and  momentum $\mu_{2}(\bx,\bv,t)\bv\dbv \rightarrow (\rho_2\bu_2)(\bx,t):\R^d\times \R_+ \mapsto \R^d$. Together, groups ${\mathcal C}_1$ and ${\mathcal C}_2$ form the crowd
\[
\rho(\by,t) = \rho_1(\by,t) + \rho_2(\by,t) \qquad \rho\bu(\by,t) = \rho_1\bu_1(\by,t) +\rho_2\bu_2(\by,t),
\]
which is governed by \ref{e:main}-\ref{e:nain}.
Here, $P_2=P(\bv-\bu_2\otimes \bv-\bu_2)$ is a second-order fluctuations pressure tensor which requires a closure relations between the microscopic and macroscopic variables. But we do  not dwell on its specific form, since   the large time behavior of ${\mathcal C}_2$ in \ref{e:nain}  is captured by the center of mass and average velocity of crowd ${\mathcal C}_2$:
\[
\bX_2(t):= \frac{1}{M_2}\int_{{\mathcal S}_2}\bx\rho_2(\bx,t)\dbx,
\qquad \bV_2(t):= \frac{1}{M_2}\int_{{\mathcal S}_2}\bu_2(\bx,t)\rho_2(\bx,t)\dbx, \quad {\mathcal S}_2:=\text{supp}\{\rho_2(\cdot,t)\}.
\]
Integrating \ref{e:nain} over the support of the first crowd ${\mathcal S}_2$: since the `self-'-alignment of ${\mathcal C}_2$ with itself  vanishes for $\by\in {\mathcal S}_2$, and using \eqref{eq:approx1} we are left with 
\[
\begin{split}
\dot{\bX}_2&=\bV_2,\\
M_2\dot{\bV}_2& = \int_{\bx\in {\mathcal S}_2}\int_{\by\in {\mathcal S}_1}\phi(\bx,\by)\big\{(\rho\bu)(\by,t)\rho_2(\bx,t) - \rho(\by,t)(\rho_2\bu_2)(\bx,t))\big\}\dby\dbx \\
& = \int_{\bx\in {\mathcal S}_2}\int_{\by\in {\mathcal S}_1}\phi(\bx,\by)\big\{(\rho_1\bu_1)(\by,t)\rho_2(\bx,t) - \rho_1(\by,t)(\rho_2\bu_2)(\bx,t))\big\}\dby\dbx
\\
& = \int_{\bx\in {\mathcal S}_2}\phi(\bx,\bX_1)\big\{M_1\bV_1\rho_2(\bx,t) - M_1(\rho_2\bu_2)(\bx,t))\big\}\dbx.
\end{split}
\]
We approximate the last two integrals by the same principle as before
\begin{equation}\label{eq:approx2}
	\begin{split}
		\int_{\bx\in {\mathcal S}_2}  \phi(\bx,\bX_1)\rho_2(\bx,t)\dbx & = \mu\psi(\bX_2,\bX_1)M_2,\\ 
		 \int_{\bx\in {\mathcal S}_2}\phi(\bx,\bX_1)  (\rho_2\bu_2)(\bx,t)\dbx &= \mu\psi(\bX_2,\bX_1)M_2\bV_2,
	\end{split}
\end{equation}
arriving at a  2-agent system described by the dynamics of their center of mass/momentum $(\bx_\a,\bV_\a)$, 
\begin{equation}\label{eq:C12ggt2}
\left\{\begin{split}
\quad \dot{\bX}_\a(t)&=\bV_\a(t),\\
\quad M_\a\dot{\bV}_\a(t)& = \mu\sum_{\b \neq \a} \psi(\bX_\a,\bX_\b)M_\a M_\b (\bV_\b(t)-\bV_\a(t)),
\end{split}\right. \qquad \a,\b \in\{1,2\}.
\end{equation}

 In summary, we began with the  agent based description for two crowds of $N_1\gg N_2$ agents, \eqref{eq:al} valid for $t\lesssim t_1$. It evolved into an agent-based  description  for  crowd of $N_2+1$ agents \eqref{eqs:C12t2} valid for $t_1 \ll t \lesssim  t_2$ and ended with 2-agent description \eqref{eq:C12ggt2} valid for $t\gg t_2$.
This is a process of \emph{up-scaling} in which the notion of an `agent' is replaced with a `multi-flock' --- a larger blob made of agents, which is identified by its center of mass/momentum. The only difference is that the multi-flock-based dynamics now takes into account only the up-scaled quantities of the multi-flock. Let us recall that the more general system \eqref{e:CS} permits up-scaling in the same way. 

\subsection{Scale separation in space}\label{sec:space} Following up on the idea of spatial separation between islands it is instructive to  assess the scale on which  approximation of mass/momentum given in \eqref{eq:approx1},\eqref{eq:approx2} is valid. To make analysis more precise we assume the large distance behavior of the communication kernel $\phi(\bx,\by) \sim |\bx-\by|^{-\eta}$. We consider the prototypical integrals in \eqref{eq:approx1}
$\displaystyle \int_{\bx \in {\mathcal S}_1}\phi(\bx,\by)\rho_1(\bx,t)\dbx$ and
$\displaystyle \int_{\bx \in {\mathcal S}_1}\phi(\bx,\by)\rho_1\bu_1(\bx,t)\dbx$
for $\by\in {\mathcal S}_2$.
We now fix $\bX\in \conv{\mathcal S}_1$ and $\bY \in \conv{\mathcal S}_2$, and for any given pair of agents $\bx\in{\mathcal S}_1$, $\by\in{\mathcal S}_2$ we decompose  $\bx-\by=(\bX -\bY)+ (\bY - \by) - (\bX-\bx)$. Thus, $R:=|\bX-\bY|$ is the (fixed)  long-range distance between  the two groups, whereas
$r:= |(\bY - \by) - (\bX-\bx)|$ encapsulates the short-range distances within the crowds, $r\ll R$. Similar decomposition holds for the weighted integral of $\rho_2$ sought in \eqref{eq:approx2}. We have
\[
\frac{1}{|\bx-\by|} = \frac{1}{\sqrt{R^2+r^2-2r\cos\theta}}=
\frac{1}{R}\sum_{k=0}^\infty\left(\frac{r}{R}\right)^kP_k(\cos \theta), 
\]
where $\cos\theta= \big\langle (\bX-\by)/R, (\bX-\bx)/r\big\rangle$ and  $P_k$ are the $k$-degree Legendre polynomials, $P_0(x)=1, P_1(x)=x$ etc.
We find
\[
\begin{split}
\frac{1}{|\bx-\by|}&= \frac{1}{R} + \frac{r}{R^2}\Big\langle \frac{\bX-\bY}{R}, \frac{(\bY - \by) - (\bX-\bx)}{r}\Big\rangle +{\mathcal O}\left(\frac{r^2}{R^3}\right)\\
& = \frac{1}{R}\left(1+ \frac{1}{R^2}\Big\langle \bX-\bY, (\bY - \by) - (\bX-\bx)\Big\rangle
+ {\mathcal O}\left(\frac{r^2}{R^2}\right)\right), \qquad \bx \in{\mathcal S}_1, \ \ \by\in{\mathcal S}_2.
\end{split}
\]
Since the contribution of the second term on the right  is of order $r/R \ll 1$ we can further approximate
\[
\begin{split}
\phi(\bx,\by) &\sim \frac{1}{|\bx-\by|^\eta} =
\frac{1}{R^\eta} \left(1+\frac{\eta}{R^2}\Big\langle \bX-\bY, (\bY - \by) - (\bX-\bx)\Big\rangle +{\mathcal O}\left(\frac{r^2}{R^2}\right)\right) \\
& = \frac{1}{R^\eta}+ \frac{\eta}{R^{2+\eta}}\big\langle \bX-\bY, (\bY - \by) - (\bX-\bx)\big\rangle +{\mathcal O}\left(\frac{r^2}{R^{2+\eta}}\right).
\end{split}
\]
The first key point is that by choosing $\bX=\bX_1$ and $\bY=\bX_2$ as the centers of mass of the flocks, so that 
$\displaystyle M_1\bX_1=\int_{\bx \in {\mathcal S}_1}\bx\rho_1(\bx,t)$,
then the second term  has a negligible contribution. Indeed, 
\[
\begin{split}
\int_{\bx \in {\mathcal S}_1}\phi(\bx,\by)&\rho_1(\bx,t)\dbx
\sim  \int_{\bx \in {\mathcal S}_1} \frac{1}{|\bx-\by|^\eta}\rho_1(\bx,t)\dbx
\\
= & \frac{1}{R^\eta} \int_{\bx \in {\mathcal S}_1}\rho_1(\bx,t)\dbx + \frac{\eta}{R^{2+\eta}}\int_{\bx \in {\mathcal S}_1} \langle \bX_1-\bX_2, (\bX_2-\by) - (\bX_1 - \bx)\rangle \rho_1(\bx,t)\dbx +{\mathcal O}\left(\frac{r^2}{R^{2+\eta}}\right)\\
& = \frac{1}{R^\eta}M_1 +  \frac{\eta}{R^{2+\eta}}\int_{\bx \in {\mathcal S}_1} \langle \bX_1-\bX_2, (\bX_2-\by) \rangle \rho_1(\bx,t)\dbx  +{\mathcal O}\left(\frac{r^2}{R^{2+\eta}}\right)\\
& = \frac{1}{R^\eta}M_1 + \cO\left( \frac{r}{R^{1+\eta}}\right)+   {\mathcal O}\left(\frac{r^2}{R^{2+\eta}}\right).
\end{split}
\]
Noting that $\phi(\bX_1,\bX_2)=  R^{-\eta}$ we conclude with the first part of \eqref{eq:approx1}
\begin{subequations}\label{eqs:approx1rhoandm}
	\begin{equation}\label{eq:approx1rho}
	\int_{\bx \in {\mathcal S}_1}\phi(\bx,\by)\rho_1(\bx,t)\dbx = \phi(\bX_1,\by)M_1 + {\mathcal O}\left(\frac{r}{R^{1+\eta}}\right), \qquad \by\in{\mathcal S}_2.
	\end{equation}
Similarly, we recover the asymptotic formula for momentum \eqref{eq:approx1}
	\begin{equation}\label{eq:approx1m}
	\int_{\bx \in {\mathcal S}_1}\phi(\bx,\by)(\rho_1\bu_1)(\bx,t)\dbx = \phi(\bX_1,\bX_2)M_1\bV_1(t) + {\mathcal O}\left(\frac{r}{R^{1+\eta}}\right).
	\end{equation}
\end{subequations}
The same argument applies for crowd ${\mathcal C}_2$: 
\begin{equation}\label{eq:approx2rhoandm}
\begin{split}
\int_{\bx \in {\mathcal S}_2}\phi(\bx,\by)\left\{\begin{array}{l}\rho_2(\bx,t) \\
(\rho_2\bu_2)(\bx,t)\end{array}\right\} \dbx &= \phi(\bX_2,\bX_1)\left\{\begin{array}{l}M_2\\ M_2\bV_2(t)\end{array}\right\} + {\mathcal O}\left(\frac{r}{R^{1+\eta}}\right), \quad \by\in{\mathcal S}_1.
\end{split}
\end{equation}
\begin{remark} The bounds  \eqref{eqs:approx1rhoandm},\eqref{eq:approx2rhoandm} quantify first order errors,  ${\mathcal O}(\epsilon_{ij}) \ll 1$, provided the diameters of  crowds ${\mathcal C}_i, {\mathcal C}_j$ are much smaller than their distance, $\epsilon_{ij}:=\max\{r_i,r_j\}/R_{ij} \ll 1$.
\end{remark}

\section{Slow and fast alignment in multi-flocks}\label{sec:rate}

\ifx
Before we proceed to discussion of alignment and flocking  let us recall the Lyapunov function approach  of Ha and Liu \cite{HL2009}.

\subsection{A system of ordinary differential inequalities and its Lyapunov function}

 Let us consider the following  system of ordinary differential inequalities for an abstract pair of parameters $(V,D)$:
\begin{equation}\label{e:ODI}
\left\{
\begin{split}
\dot{A} & \leq - \l \psi(D)A \\
\dot{D} & \leq  A.
\end{split}\right.
\end{equation}

\begin{lemma}[\cite{HL2009}]\label{l:HL} Suppose $\psi$ is a smooth, non-increasing, everywhere positive kernel. Consider a solution to \eqref{e:ODI} with initial condition $(A_0,D_0)$. If the kernel satisfies the following condition 
\begin{equation}\label{e:fat-sid}
\int_{D_0}^{\infty} \psi (r) \dr > \frac{A_0}{\l} ,
\end{equation} 
then  there exist $\overline{D}$ and $\d>0$ such that 
$D(t) \leq \overline{D}$ and $A(t) \leq A_0 e^{-\d t}$.
\end{lemma}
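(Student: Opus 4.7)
The plan is to prove the lemma by a standard Lyapunov argument of the type used in \cite{HT2008} for Cucker--Smale systems. Define the Lyapunov functional
\[
L(t) := A(t) + \lambda \int_{D_0}^{D(t)} \psi(r) \, \dr.
\]
Using the chain rule together with the two differential inequalities in \eqref{e:ODI}, and the positivity $\psi \geq 0$, I get
\[
\dot{L}(t) = \dot{A}(t) + \lambda \psi(D(t)) \dot{D}(t) \leq -\lambda \psi(D(t)) A(t) + \lambda \psi(D(t)) A(t) = 0.
\]
(Here I am tacitly using that $A(t) \geq 0$, which is the setting in which the lemma is applied; otherwise one first argues that $A$ stays nonnegative, since if $A$ became negative, $D$ would be non-increasing, $\psi(D)$ would be bounded below, and $A$ would in fact increase back.) So $L$ is non-increasing, giving
\[
\lambda \int_{D_0}^{D(t)} \psi(r) \, \dr \leq L(t) \leq L(0) = A_0, \qquad \forall t \geq 0.
\]

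Next I invoke the fat-tail assumption \eqref{e:fat-sid}. Since $\int_{D_0}^{\infty} \psi(r) \, \dr > A_0/\lambda$, there exists a finite $\overline{D} > D_0$ such that $\int_{D_0}^{\overline{D}} \psi(r) \, \dr = A_0/\lambda$. Because $\psi > 0$, the map $D \mapsto \int_{D_0}^{D} \psi(r)\,\dr$ is strictly increasing, so the bound above forces $D(t) \leq \overline{D}$ for all $t \geq 0$. This is the first conclusion.

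Finally I plug this bound back into the first inequality of \eqref{e:ODI}. Since $\psi$ is non-increasing, $\psi(D(t)) \geq \psi(\overline{D}) > 0$, and so
\[
\dot{A}(t) \leq -\lambda \psi(\overline{D}) A(t).
\]
Gr\"onwall's inequality then yields $A(t) \leq A_0 e^{-\delta t}$ with the explicit rate $\delta := \lambda \psi(\overline{D})$, completing the proof. The only subtle point is the sign of $A$ (needed to turn the Lyapunov identity into a useful monotonicity statement), but in the intended applications of the lemma $A$ represents an energy-like quantity that is nonnegative by construction, so this is not a real obstacle.
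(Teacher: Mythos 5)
Your proof is correct and is essentially identical to the paper's own argument: the same Lyapunov function $L = A + \l \int \psi(r)\dr$ (the paper uses lower limit $0$ rather than $D_0$, an immaterial constant shift), the same definition of $\overline{D}$ from $\l\int_{D_0}^{\overline{D}}\psi(r)\dr = A_0$, and the same Gr\"onwall step giving rate $\d = \l\psi(\overline{D})$. Your aside about the sign of $A$ is a reasonable point that the paper leaves implicit.
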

Note that condition \eqref{e:fat-sid} is always satisfied for the so-called fat tail kernels:
\begin{equation}\label{e:fatPsi}
\int_0^\infty \psi(r) \dr = \infty.
\end{equation} 
\begin{proof} The result follows easily by noticing that the  system \eqref{e:ODI}  has a decreasing Lyapunov function given by $	L (A,D) = A + \l  \int_0^{D} \psi (r) \dr$. This, in particular, implies that 
	\[
\l \int_0^{D(t)} \psi (r) \dr \leq  A_0 + \l\int_0^{D_0} \psi (r) \dr, \quad \forall t>0.
	\]
Consequently, $D(t) \leq \bar{D}$, where $\bar{D}$ is obtained from the equation
	\begin{equation}\label{e:Dinfty}
\l \int_{D_0}^{\bar{D}} \psi (r) \dr =  A_0,
	\end{equation}
which is guaranteed to have a finite solution due to   \eqref{e:fat-sid}.  Then,  $\dot{A} \leq - \l  \psi(\bar{D}) A$, hence
	\begin{equation}\label{e:Valign}
	A(t) \leq A_0 e^{-\l \psi(\bar{D}) t}.
	\end{equation}
\end{proof}

Solving equation \eqref{e:Dinfty} allows in some case to provide explicit decay rates for solutions of \eqref{e:ODI}, and obtain estimates on the diameter $\bar{D}$. In particular, for the classical Cucker-Smale kernel
\[
\psi(r) = \frac{1}{(1+r^2)^\frac{\g}{2}},
\]
one obtains
\begin{equation}\label{e:rate<1}
	\begin{split}
	\bar{D} & \leq \left( \left[  \frac{1-\g}{\l} V_0+ (1+D_0^2)^\frac{1-\g}{2}   \right]^{\frac{2}{1-\g}} - 1 \right)^\frac12, \quad \g<1, \\
	\mbox{rate} & = \frac{\l}{ \left[  \frac{1-\g}{\l} V_0 + (1+D_0^2)^\frac{1-\g}{2}   \right]^{\frac{\g}{1-\g}} }.
	\end{split}
\end{equation}
(here one replaces $\psi$ with a smaller but explicitly integrable kernel $ \frac{ r}{(1+r^2)^\frac{\g+1}{2}}$),and
\begin{equation}\label{e:rate1}
	\bar{D}  \leq  \left( e^{ \frac{2}{\l} V_0 } (1+D_0^2) - 1 \right)^\frac12, \quad \mbox{rate}  = \frac{\l}{  e^{ \frac{ V_0}{\l} } (1+D_0^2)^\frac12}, \quad \g = 1.
\end{equation}
These formulas will be helpful to understand dependency of the rate on communication strength within and between flocks.

Let us consider the following size metrics of the system \eqref{e:CS}:
\[
\begin{split}
 \cD_\a &= \max_{i,j}|\bx_{\a i} - \bx_{\a j}|, \quad  \cD = \max_{\a,\b}|\bX_{\a} - \bX_{\b}| \\
 \cA_\a &=  \max_{i,j } |\bv_{\a i}-\bv_{\a j}|, \quad \cA =  \max_{\a,\b}   |\bV_{\a}-\bV_{\b}|.
\end{split}
\]
\fi

In this section we focus on alignment dynamics for system \eqref{e:CS} under conditions of \thm{t:fast} and \ref{t:slow}. In fact, with a slight abuse of notation we will make a more general assumption that there exist  radially symmetric subkernels
\begin{equation}\label{e:subk}
	\phi_\a(\bx,\by) \geq \phi_\a(\radial{\bx-\by}), \qquad \psi(\bx,\by)\geq \psi\radial{\bx-\by}),
\end{equation}
which are positive, monotonely decreasing, and fat tail at infinity
\begin{equation}\label{e:fatPsi}
\int^\infty_{r_0} \phi_\a(r)dr=\infty , \qquad \int^\infty_{r_0} \psi(r) dr=\infty.
 \end{equation}

We start by noting that any cluster system \eqref{e:CS} satisfies the global maximum principle -- maximum of each coordinate in the total family $\bv_{\a i}$ is non-increasing, and the minimum is non-decreasing. Therefore the system \eqref{e:CS} is well prepared ``as is" for establishing global flocking behavior.  However, this is not be the case for each individual flock. Each flock satisfies  "internal maximum principle" relative to its own time-dependent  momentum $\bV_\a$. This dictates passage to the reference frame evolving with that momentum and  center of mass:
\begin{equation}\label{e:wv}
\bw_{\a i } = \bv_{\a i} - \bV_\a, \quad \by_{\a i } =  \bx_{\a i} - \bX_\a.
\end{equation}
Using \eqref{e:CS} and \eqref{e:CSmacro} one readily obtains the system
\begin{equation}\label{e:CSshift}
\left\{
\begin{split}
\dot{\by}_{\a i}&=\bw_{\a i},\\
\dot{\bw}_{\a i}& = \l_\a  \sum_{j = 1}^{N_\a} m_{\a j}  \phi_{\a i j} (\bw_{\a i}-\bw_{\a j})- \e R_\a(t) \bw_{\a i},
\end{split}\right. 
\end{equation}
where $R_\a(t)  := \sum_{\b \neq \a} M_\b \psi(\radial{\bX_\a -\bX_\b})$, and we abbreviate $\phi_{\a i j}  = \phi_\a(\by_{\a i}+\bX_\a, \by_{\a j}+ \bX_\a)$. This system now does have a maximum principle and is well prepared for establishing flocking.

Let us denote individual flock parameters:
\[
\cD_\a(t) :=\max_{i, j=1,\ldots,N_\a}|\bx_{\a i}(t) - \bx_{\a j}(t)|, \quad \cA_\a = \max_{i, j=1,\ldots,N_\a}   | \bw_{\a i}-\bw_{\a j}| = \max_{\substack{\boldsymbol{\ell} \in \R^n: |\boldsymbol{\ell}| = 1\\ i, j=1,\ldots,N_\a}}  \lan \boldsymbol{\ell}, \bw_{\a i}-\bw_{\a j}\ran.
\]
By Rademacher's lemma,  we can evaluate the derivative of $\cA_\a$ by considering $\boldsymbol{\ell}, i, j$ at which that maximum is achieved at any instance of time:
\[
\begin{split}
\ddt \cA_\a & = \lan \boldsymbol{\ell},  \dot{\bw}_{\a i}- \dot{\bw} _{\a j}\ran  =  \l_\a  \sum_{k = 1}^{N_\a} m_{\a k}  \phi_{\a i k}  \lan \boldsymbol{\ell}, \bw_{\a k}-\bw_{\a i} \ran - \l_\a  \sum_{k = 1}^{N_\a} m_{\a k}  \phi_{\a j k}  \lan \boldsymbol{\ell}, \bw_{\a k}-\bw_{\a j} \ran \\
&- \e R_\a(t) \lan \boldsymbol{\ell}, \bw_{\a i}-\bw_{\a j}\ran \\
& =  \l_\a  \sum_{k = 1}^{N_\a} m_{\a k}  \phi_{\a i k} ( \lan \boldsymbol{\ell}, \bw_{\a k}-\bw_{\a j} \ran - \lan \boldsymbol{\ell}, \bw_{\a i}-\bw_{\a j} \ran ) \\
&+  \l_\a  \sum_{k = 1}^{N_\a} m_{\a k}  \phi_{\a j k} (  \lan \boldsymbol{\ell}, \bw_{\a i}-\bw_{\a k} \ran -  \lan \boldsymbol{\ell}, \bw_{\a i}-\bw_{\a j} \ran ) - \e R_\a(t)\cA_\a.
\end{split}
\]
Each difference of the action of $\boldsymbol{\ell}$ is negative due to maximality of $\boldsymbol{\ell}, i , j$. Hence, we replace values of $\phi_\a$'s with the use of \eqref{e:subk} and its minimal value at  $\cD_\a$:
\[
\begin{split}
\ddt \cA_\a &\leq  \l_\a  \phi_\a(\cD_\a) \sum_{k = 1}^{N_\a} m_{\a k}   ( \lan \boldsymbol{\ell}, \bw_{\a k}-\bw_{\a j} \ran - \lan \boldsymbol{\ell}, \bw_{\a i}-\bw_{\a j} \ran +  \lan \boldsymbol{\ell}, \bw_{\a i}-\bw_{\a k} \ran -  \lan \boldsymbol{\ell}, \bw_{\a i}-\bw_{\a j} \ran ) \\
& - \e R_\a(t)\cA_\a = - \l_\a  M_\a\phi_\a(\cD_\a) \cA_\a  - \e R_\a(t)\cA_\a.
\end{split}
\]
At the same time,  $R_\a(t) \geq M \psi(\cD)$, where
\[
\cD :=\max_{\a,\b}   |\bX_{\a}-\bX_{\b}|, \quad \cA :=\max_{\a,\b}   |\bV_{\a}-\bV_{\b}|.
\]
Combining it with the system for $(\cD,\cA)$ which follows a similar computation applied to macroscopic values \eqref{e:CSmacro},  we arrive at the following system of ODIs:
\begin{equation}\label{e:VDVDa}
\left\{\begin{aligned}
\dot\cA_\a & \leq - \l_\a M_\a \phi_\a(\cD_\a) \cA_\a  - \e M \psi(\cD)\cA_\a \\
\dot{\cD_\a} & \leq  \cA_\a\\
\dot{\cA} & \leq -  \e M \psi(\cD)\cA \\
\dot{\cD} & \leq  \cA.
\end{aligned}\right.
\end{equation}
This system encompasses prototypical systems of the form
\begin{equation}\label{e:ODI}
\left\{
\begin{split}
\dot{A} & \leq - \g \phi(D)A \\
\dot{D} & \leq  A,
\end{split}\right.
\end{equation}
Following Ha and Liu \cite{HL2009} we can define a Lyapunov function
\[
L = A + \g \int_0^D \phi(r)\dr,
\]
which is non-increasing.  Hence, there exists $\overline{D}$ and $\d>0$ such that
	\begin{equation}\label{e:fat-sid}
	\int_{D_0}^{\infty} \phi (r) \dr > \frac{A_0}{\g} \ \ \leadsto \ \ D(t) \leq \overline{D}, \ A(t) \leq A_0 e^{-\g \phi(\overline{D}) t}.
	\end{equation} 
Note that condition \eqref{e:fat-sid} is always satisfied for a  fat tail $\phi$.

Going back to \eqref{e:VDVDa} and ignoring the  term $ - \e M \psi(\cD)\cA_\a $ in the $\cA_\a$ equation we observe  that the $\a$-flock completely decouples from the rest of the multi-flock.   We arrive at \eqref{e:ODI} for the pair $(\cD_\a,\cA_\a)$. One obtains the fast internal alignment result \eqref{eq:fast} asserted in Theorem \ref{t:fast}
\[
\max_{i}| \bv_{\a i}(t) - \bV_\a(t)| \lesssim e^{-\d_\a t}, \qquad \d_\a=\l_\a M_\a \phi_\a(\overline{\cD}_\a).
\]
As noted before,  this indicates that the $\a$-flock behavior  depends solely only  on its own parameters, but not the global values.  In particular,  the $a$-flock alignment towards  $\bV_\a(t)$ occurs regardless whether these centers of momentum  align or not. The latter will be guaranteed if the inter-flock communication $\psi$ satisfies the fat tail condition \eqref{e:fatPsi}. In fact, in this case the global alignment ensues even if internal communications are  completely absent. This is evident from \eqref{e:VDVDa} where we ignore the $- \l_\a M_\a \phi_\a(\cD_\a) \cA_\a $ term and obtain boundedness of $\cD$ from the last two equations, obtaining the slow alignment \eqref{eq:slow} asserted in theorem \ref{t:slow} 
\[
\max_{\a, i}| \bv_{\a i}(t) - \bV| \lesssim e^{-\d t}, \qquad \d=\e M \psi(\overline{\cD}).
\]
Alignment rate in this case is slow since it depends on $\e$ and the global diameter of the multi-flock.

\begin{remark}[{\bf Asymptotic rate}]
Asymptotic dependence of the implied alignment rates for small $\e$  and large $\l_\a$ for the Cucker-Smale kernel can  be worked out from \eqref{e:fat-sid} (we omit the details). In the context of fast local alignment with $\phi_\a(r) \sim r^{-\eta_a}$ we obtain $\d \sim \l_\a$ for all $\eta_\a \leq 1$, while in the context of slow alignment with $\psi(r)=r^{-\zeta}$ we obtain  
\[
\d \sim \left\{\begin{array}{ll}\e^{\frac{1}{1-\zeta}},  & \zeta<1,\\
 \e e^{-1/\e}, \ & \zeta =1 .\end{array}\right.
 \]
\end{remark}

\section{Multi-flocking driven by alignment and attraction}\label{sec:attr}
In this section we consider multi-flock alignment model with additional attraction forces. Out goal is to show that each flock would aggregate towards its center of mass within the radius of influence of the potential. Our results present an extension of \cite{ST2019b}.

We assume that the interactions are determined by a radially symmetric smooth potential $U \in C^2(\R_+)$:
\begin{equation}\label{e:CSattr}
\left\{
\begin{split}
\dot{\bx}_{\a i}&=\bv_{\a i},\\
\dot{\bv}_{\a i}& = \frac{1}{N_\a} \sum_{j = 1}^{N_\a}  \phi_\a(\radial{\bx_{\a i}- \bx_{\a j}}) (\bv_{\a j}-\bv_{\a i}) + \e \sum_{\substack{\b=1 \\ \b \neq \a}}^{A} \psi(\radial{\bX_\a-\bX_\b}) (\bV_{\b}-\bv_{\a i}) + \bF_\a(t),
\end{split}\right. 
\end{equation}
where
\[
\bF_{\a i}(t)= - \frac{1}{N_\a} \sum_{j=1}^{N_\a} \n U(\radial{\bx_{\a i} - \bx_{\a j}}).
\]
Here we assumed for notational simplicity that all masses are $1/N_\a$, and potentials are the same. However, the statements below can easily be carried out for a general set of parameters. 

Note that the system upscales to the same Cucker-Smale system \eqref{e:CSmacro} for the flock-level variables.

Using transformation \eqref{e:wv}, we rewrite the system in the new coordinate frame
\begin{equation}\label{e:CSshiftattr}
\left\{
\begin{split}
\dot{\by}_{\a i}&=\bw_{\a i},\\
\dot{\bw}_{\a i}& = \frac{1}{N_\a} \sum_{j = 1}^{N_\a}  \phi_{\a i j} (\bw_{\a i}-\bw_{\a j})- \e R_\a(t) \bw_{\a i} + \bF_{\a i}(t).
\end{split}\right. 
\end{equation}
The classical energy $\cE_\a  = \cK_\a + \cP_\a$ where\footnote{Here and in the sequel we occasionally use a shortcut for  $\ba_{\a ij} = \ba_{\a i} - \ba_{\a j}$.},
\begin{equation}\label{e:EKP}
\begin{split}
\cK_\a &:= \frac{1}{2N_\a} \sum_{i=1}^{N_\a}|\bw_{\a i}|^2 =  \frac{1}{4N_\a^2} \sum_{i=1}^{N_\a} |\bw_{\a ij}|^2,  \qquad \bw_{\a ij}=\bw_{\a i}-\bw_{\a j},\\
\cP_\a  &:= \frac{1}{2 N_\a^2} \sum_{i,j=1}^{N_\a} U(\radial{\by_{\a ij}}), \qquad
\by_{\a ij}=\by_{\a i}-\by_{\a j},
\end{split}
\end{equation}
satisfies
\begin{equation}\label{e:enlawU}
\ddt \cE_\a = -  \frac{1}{N_\a^2} \sum_{i,j =1}^{N_\a} \phi_{\a ij} |\bw_{\a ij}|^2  - \e R_\a(t) \cK_\a: = -\cI_\a -  \e R_\a(t) \cK_\a.
\end{equation}
At this stage already we can see that if $\e>0$ and $\psi$ has a fat tail, then global slow exponential alignment will insue regardless of internal flock communications. Indeed, the up-scaled dynamics \eqref{e:CSmacro} will stabilize the macroscopic values which implies boundedness of $R_\a$. Hence, ignoring dissipation term $\cI_\a$ in \eqref{e:enlawU} we obtain exponential decay of all the energies:
\[
\cE_\a \lesssim e^{- c \e t}.
\]
In this section we show that flocking occurs also in each individual $\a$-flock regardless of global communication, although it may be happening at a slower rate. To fix the notation we consider regular communication kernels with power-like decay:
\begin{equation}\label{e:kerUattr}
\phi_\a'(r) \leq 0, \quad  \phi_\a(r) \ge \frac{c_0}{\lan r \ran ^\g}, \text{ for } r \geq 0.
\end{equation}
For the potential we assume essentially a power law: for some $\b > 1$ and $L'>L > 0$, 
\begin{equation}\label{e:Uattr}
\begin{split}
\mbox{Support:}& \qquad U \in C^2(\R^+), \quad  U(r) = 0, \quad \forall r\leq L,\\
\mbox{Growth:}& \qquad U(r) \geq a_0 r^\b, \quad  |U'(r)| \leq a_1 r^{\b-1}, \quad |U''(r)| \leq a_2 r^{\b-2}, \quad \forall r>  L', \\
\mbox{Convexity:}& \qquad  U'(r),U''(r) \geq 0,\quad \forall r>0.
\end{split}
\end{equation}

\begin{theorem}[Local flocking with interaction potential]\label{t:attr}
	Under the assumptions \eqref{e:kerUattr} and \eqref{e:Uattr} on the kernel and potential in the range of parameters given by
	\begin{equation}\label{e:gb} 
	\g < \left\{
	\begin{split}
	1,& \quad 1<\b<\frac43,\\
	\frac32 \b -1,& \quad  \frac43\leq \b<2,\\
	2,&  \quad  \b\geq 2,\\
	\end{split}\right.
	\end{equation}
	all solutions to the system \eqref{e:CSattr} flock 	with the bound independent of $N_\a$:
		\[
	\cD_\a(t) < \overline{\cD}_\a, \quad \forall t>0,
	\]
 asymptotically aggregate 
	\[
	\limsup_{t \to \infty} \cD_\a(t) \leq L,
	\]
	and align
	\begin{equation}\label{e:driniE}
	\frac{1}{2N_\a} \sum_{i=1}^{N_\a}|\bv_{\a i}-\bV_\a|^2 + \frac{1}{2 N_\a^2} \sum_{i,j=1}^{N_\a} U(\radial{\bx_{\a i}-\bx_{\a j}}) \leq \frac{C_\d}{\lan t \ran^{1-\d}}, \quad \forall \d > 0.
	\end{equation}
\end{theorem}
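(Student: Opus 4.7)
The plan is to reduce the theorem to an internal single-flock problem, to which the strategy of \cite{ST2019b} can be adapted. Two simplifications are available. First, the inter-flock coupling in the shifted frame \eqref{e:CSshiftattr} is purely dissipative: by \eqref{e:enlawU}, $\ddt\cE_\a = -\cI_\a - \e R_\a(t)\cK_\a \leq -\cI_\a$, so the inter-flock term can only improve the internal energy budget. Second, applying \thm{t:slow} to the macroscopic system \eqref{e:CSmacro} gives that the centers $\bX_\b(t)$ flock, so $R_\a(t)$ is uniformly bounded and may be treated as a benign time-dependent coefficient. Hence the three conclusions need only be proved for a single $\a$-flock with an additional nonnegative damping, which is essentially the \cite{ST2019b} setting refined to allow a confining potential that vanishes on $[0,L]$.

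\emph{Step 1: uniform diameter bound.} Monotonicity of $\cE_\a$ in \eqref{e:enlawU} gives $\cP_\a(t) \leq \cE_\a(0) =: E_0$ for all $t>0$. Combining this with the lower bound $U(r)\geq a_0 r^\b$ for $r > L'$, the convexity of $U$, and the centering $\sum_i \by_{\a i}(t)=0$, one argues as in \cite{ST2019b} that a constant fraction of pairs $(i,j)$ must sit near the diameter scale, yielding $\cD_\a(t)^\b \lesssim E_0$; this produces $\cD_\a(t) \leq \overline{\cD}_\a$ with $\overline{\cD}_\a$ depending on $E_0$, $a_0$, and $L'$ but independent of $N_\a$.

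\emph{Step 2: algebraic alignment.} This is the heart of the proof and the principal obstacle. Following \cite{ST2019b} I would introduce a modulated Lyapunov functional of the form
\[
\cF_\a(t) \ := \ \lan t\ran^{1-\d} \cE_\a(t) \ - \ c_1 \lan t \ran^{-\d} \cQ_\a(t),
\]
where $\cQ_\a$ is a position-velocity cross-term, e.g.\ $\cQ_\a = \frac{1}{N_\a^2}\sum_{i,j}\phi_\a(\radial{\by_{\a ij}})\,\by_{\a ij}\!\cdot\!\bw_{\a ij}$ or a regularized variant, designed so that differentiation converts the velocity dissipation $\cI_\a$ into decay of the potential part $\cP_\a$. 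Two competing estimates drive the calculation: from Step~1 and \eqref{e:kerUattr} one has $\cI_\a \gtrsim \phi_\a(\overline{\cD}_\a)\cK_\a \gtrsim \overline{\cD}_\a^{-\g}\cK_\a$, while the control of $\cQ_\a$ after differentiation costs $U''(r)\lesssim r^{\b-2}$ against $\cP_\a$. Choosing $c_1$ and the temporal weights so that $\ddt\cF_\a \leq 0$ modulo terms absorbable by $\lan t\ran^{-\d}\cE_\a$ requires exactly the balance between $\g$ and $\b$ encoded in \eqref{e:gb} — the three cases in \eqref{e:gb} correspond to regimes $\b<\tfrac43, \ \tfrac43\leq\b<2,\ \b\geq 2$ in which the dissipation, the convexity of $U$, or both, are the binding constraints. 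Integrating $\ddt\cF_\a \leq 0$ delivers $\cE_\a(t) \lesssim \lan t\ran^{-(1-\d)}$, which is \eqref{e:driniE}.

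\emph{Step 3: asymptotic aggregation.} Since $\cP_\a(t)\to 0$ by Step~2, and $U(r)>0$ precisely for $r>L$, suppose for contradiction that $\limsup_{t\to\infty}\cD_\a(t) > L+\s$ for some $\s>0$. Then along a sequence $t_n\to\infty$ some pair $(i_n,j_n)$ (which may be assumed fixed after passing to a subsequence, as indices lie in a finite set) satisfies $\radial{\by_{\a i_n j_n}(t_n)}>L+\s$, whence $\cP_\a(t_n) \geq N_\a^{-2} U(L+\s) > 0$, contradicting $\cP_\a(t_n)\to 0$. Therefore $\limsup_{t\to\infty}\cD_\a(t) \leq L$, completing the proof.
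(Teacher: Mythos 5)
Your Step 1 contains the central gap, and it propagates into Step 2. The monotone bound $\cP_\a(t)\leq \cE_\a(0)$ does \emph{not} yield an $N_\a$-independent diameter bound: a single outlier agent at distance $D$ from a tight cluster of the remaining $N_\a-1$ agents contributes only $\sim N_\a^{-1}U(D)$ to $\cP_\a=\frac{1}{2N_\a^2}\sum_{i,j}U(\radial{\by_{\a ij}})$, so $\cP_\a\leq E_0$ permits $\cD_\a\sim (N_\a E_0)^{1/\b}$. The centering $\sum_i\by_{\a i}=0$ does not exclude this configuration, and no ``constant fraction of pairs sits at the diameter scale.'' This is precisely why the paper works instead with the \emph{particle} energies $\cE_{\a i}=\frac12|\bw_{\a i}|^2+\frac{1}{N_\a}\sum_k U(\radial{\by_{\a ik}})$ and $\cE_{\a\infty}=\max_i\cE_{\a i}$, which do control the diameter via $\cE_{\a i}\geq(|\by_{\a i}|-L')_+^\b$ but are \emph{not} monotone; controlling their growth is the main technical work. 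Without a uniform $\overline{\cD}_\a$ in hand, your Step 2 inequality $\cI_\a\gtrsim\phi_\a(\overline{\cD}_\a)\cK_\a$ has no foundation. Indeed, if the uniform diameter bound were available at the outset, the kernel would be bounded below by a constant and one would expect far better than the $\lan t\ran^{-(1-\d)}$ rate with the restriction \eqref{e:gb} --- the very form of the conclusion signals that the bound cannot come first.

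The missing idea is a bootstrap in the opposite order. The paper first derives an a priori \emph{polynomial growth} bound $\cD_\a(t)\lesssim\lan t\ran^{d}$ (with $d$ depending on the three regimes of $\b$, via first- or second-derivative estimates on $U$ applied to $\ddt\cE_{\a\infty}$); this gives the time-dependent lower bound $\phi_\a(\cD_\a)\gtrsim\lan t\ran^{-\g d}$, and it is the requirement $\g d<1$ (together with $d(2-\b)_+<1$) that produces exactly the three cases of \eqref{e:gb} --- not a balancing of constants inside the Lyapunov functional as you suggest. With this decaying dissipation rate, a corrected energy $\cE_\a+\d\eta(t)\cX_\a+2c\eta^2(t)$, $\eta(t)=\lan t\ran^{-a}$, $\cX_\a=\frac{1}{N_\a}\sum_i\by_{\a i}\cdot\bw_{\a i}$, yields the algebraic decay \eqref{e:driniE} by Duhamel; this in turn improves the diameter growth to $\lan t\ran^{d/2+\d}$, and only then does a second pass through the \emph{individual} particle energies $E_{\a i}=\cE_{\a i}+\d\eta(t)\cX_{\a i}+2L^2\d\eta^2(t)$ deliver the uniform bound $\cD_\a(t)<\overline{\cD}_\a$. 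One further small point: the inter-flock term is dissipative only at the level of $\cE_\a$; in the corrector computation the residual $-\e R_\a(t)\cX_\a$ is not sign-definite and must be absorbed explicitly. Your Step 3 (aggregation from $\cP_\a\to0$ for fixed $N_\a$, using monotonicity and convexity of $U$ to get $U>0$ on $(L,\infty)$) is fine once \eqref{e:driniE} is actually established.
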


Note that the latter statement follows from local alignments \eqref{e:driniE} and the exponential alignment of the flock parameters governed by the upscaled system \eqref{e:CSmacro}.

\begin{proof}
	We will operate with the particle energy defined similarly to \cite{ST2019b}
	\[
	\cE_{\a i} = \frac12 |\bw_{\a i}|^2 + \frac{1}{N_\a} \sum_{k=1}^{N_\a} U(\radial{\by_{\a ik}}),\quad \cE_{\a \infty} = \max_i \cE_{\a i}.
	\]
	First, we observe that the particle energy controls the diameter of the flock. Indeed, by convexity and our assumptions on the growth of the potential,  we have
	\begin{equation}\label{e:Ei0}
	\cE_{\a i} \geq U(\radial{\by_{\a i}}) \geq (|\by_{\a i}| - L')_+^\b.
	\end{equation}
	So, 
	\begin{equation}\label{e:diamE}
	\cD_\a \leq   \cE_{\a \infty}^{1/\b} + L'.
	\end{equation}
Let us now establish a bound on $ \cE_{\a \infty}$.  For each $i$ we test \eqref{e:CSshiftattr} with $\bw_{\a i}$ and ignore that $R_\a$-term :
	\begin{equation}\label{e:Ei1}
	\ddt \cE_{\a i } \leq  \frac{1}{N_\a} \sum_{k=1}^{N_\a} \phi_{\a ik} \bw_{\a ki} \cdot \bw_{\a i} - \frac{1}{N_\a} \sum_{k=1}^{N_\a} \n U(\radial{\by_{\a ik}}) \cdot \bw_{\a k}.
	\end{equation}
	For the kinetic part we use the vector identity
	\begin{equation}\label{e:Ei2}
	\ba_{ki} \cdot \ba_i= - \frac12 |\ba_{ki}|^2 - \frac12 |\ba_i|^2 + \frac12 |\ba_k|^2.
	\end{equation}
	Discarding all the negative terms, we bound 
	\[
\frac{1}{N_\a} \sum_{k=1}^{N_\a} \phi_{\a ik} \bw_{\a ki} \cdot \bw_{\a i}  \leq  |\phi_\a|_\infty \cK_\a.
	\]
	Due to the energy law $\cK_\a$ will remain bounded, but we will keep it in the bound above for now.  As for the potential term, there are several ways we can handle it. 
	
	For any $1\leq \b\leq \frac43$ we apply a direct estimate from the first derivative:
	\[
	\left| \frac{1}{N_\a} \sum_{k=1}^{N_\a}  \n U(\radial{\by_{\a ik}}) \cdot \bw_{\a k} \right| \leq \sqrt{\cK_\a}  \left(  \frac{1}{N_\a} \sum_{k=1}^{N_\a}  | \n U(\radial{\by_{\a ik}}) |^2 \right)^\frac12 \leq \sqrt{\cK_\a} \cD_\a^{\b-1}.
	\]
	Consequently,
	\[
	\ddt \cE_{\a i} \leq c_1 \cK_\a + c_2  \sqrt{\cK_\a} \cD_\a^{\b-1} \lesssim   \sqrt{\cK_\a}(1+ \cE_{\a \infty}^{\frac{\b-1}{\b}}),
	\]
	and 
	\begin{equation}\label{e:parE1}
	\ddt \cE_{\a \infty} \leq c_3 \sqrt{\cK_\a}(1+ \cE_\infty^{\frac{\b-1}{\b}})  \quad \Rightarrow \quad  \cE_{\a \infty}  \lesssim    \lan t\ran^{\b}  \quad \Rightarrow \quad \cD_\a \lesssim \lan t\ran.
	\end{equation}

	In the range $\frac43 \leq \b \leq 2$ it is better to make use of the second derivative:
	\begin{equation}\label{e:potS}
	\begin{split}
	\left| \frac{1}{N_\a} \sum_{k=1}^{N_\a} \n U(\radial{\by_{\a ik}}) \cdot \bw_{\a k} \right| & = \frac{1}{N_\a} \sum_{k=1}^{N_\a}  (\n U(\radial{\by_{\a ik}}) - \n U(\radial{\by_{\a i}}) ) \cdot \bv_k \\
	& \leq \|D^2 U\|_\infty \sqrt{\cK_\a} \left(  \frac{1}{N_\a} \sum_{k=1}^{N_\a}  | \by_{\a k} |^2 \right)^\frac12 \\
	 & \leq  c_4 \sqrt{\cK_\a} \left(  \frac{1}{N_\a^2} \sum_{i,j=1}^{N_\a} | \by_{\a ij} |^2 \right)^\frac12 .
	\end{split}
	\end{equation}
	The following inequality will be used repeatedly
	\begin{equation}\label{e:sqP}
 \frac{1}{N_\a^2} \sum_{i,j=1}^{N_\a} | \by_{\a ij} |^2 \leq (L')^2 + 	 \frac{1}{N_\a^2} \sum_{i,j=1}^{N_\a} (|\by_{\a ij}|-L')_+^2  \leq C( 1 + \cD_\a^{(2-\b)_+} \cP_\a).
	\end{equation}
	Continuing the above,
	\[
	\left| \frac{1}{N_\a} \sum_{k=1}^{N_\a}  \n U(\radial{\by_{\a ik}}) \cdot \bw_{\a k} \right|  
	\leq c_4 \sqrt{\cK_\a} ( 1 +  \cD_\a^{2-\b} \cP_\a)^{1/2}  \leq c_5 \sqrt{\cK_\a} (1+\cE_{\a \infty})^{\frac{2-\b}{2\b}} .
	\]
	In this case,
	\begin{equation}\label{e:parE2}
	\ddt \cE_{\a \infty} \leq c_6 \sqrt{\cK_\a} (1+\cE_{\a\infty})^{\frac{2-\b}{2\b}}  \quad \Rightarrow \quad   \cE_{\a\infty} \lesssim \lan t\ran^{ \frac{2\b}{3\b - 2} }   \quad \Rightarrow \quad  \cD_\a \leq \lan t\ran^{\frac{2}{3\b - 2} }.
	\end{equation}
	
	Finally, for $\b >2$, we argue similarly, using that $|D^2U(\radial{\by_{\a ik}})| \leq \cD_\a^{\b-2}$, and \eqref{e:sqP},
	to obtain
	\[
	\left|  \frac{1}{N_\a} \sum_{k=1}^{N_\a}  \n U(\radial{\by_{\a ik}}) \cdot \bw_{\a k} \right| \leq \sqrt{\cK_\a} \cD_\a^{\b-2} ,
	\]
	and hence,
	\begin{equation}\label{e:parE3}
	\ddt \cE_{\a\infty} \leq c_7 \sqrt{\cK_\a} (1+\cE_{\a\infty})^{\frac{\b-2}{\b}} \quad \Rightarrow \quad   \cE_{\a\infty} \lesssim \lan t\ran^{ \frac{\b}{2} } \quad \Rightarrow \quad \cD_\a \leq \lan t\ran^{\frac{1}{2} }.
	\end{equation}
	We have proved the following a priori estimate:
	\begin{equation}\label{e:DupperU}
	\cD_\a(t) \lesssim \lan t\ran^d, \quad \text{ where }\quad  d =  \left\{
	\begin{split}
	1&, \quad 1\leq \b < \frac43,\\
	\frac{2}{3\b - 2} &, \quad \frac43 \leq \b < 2,\\
	\frac{1}{2} &, \quad \b \geq 2.
	\end{split}\right.
	\end{equation}
	
	Denote $\displaystyle 	\zeta(t) = \lan t\ran^{-\g d}$.
	Then according to the basic energy equation  \eqref{e:enlawU}  we have
	\begin{equation}\label{e:enprecoer}
	\ddt \cE_\a \leq - \frac12 \cI_\a - c \zeta(t) \cK_\a - \e R_\a(t) \cK_\a.
	\end{equation}
	Considering this as a starting point, just like in the quadratic confinement case, we will build correctors to the energy to achieve full coercivity on the right hand side of \eqref{e:enprecoer}.  We introduce one more auxiliary power function
	\[
	\eta(t) =  \lan t\ran^{- a}, \quad  \g d \leq a <1.
	\]
	First, we consider the same longitudinal momentum
	\[
	\cX_\a = \frac{1}{N_\a} \sum_{i=1}^{N_\a} \by_{\a i} \cdot \bw_{\a i}.
	\]
	It will come with a pre-factor $\d \eta(t)$, where $\d$ is a small parameter.  Let us estimate using \eqref{e:sqP}:
	\[
	\d \eta(t) |\cX_\a| \leq  \d \cK_\a + \d \eta^2(t) \frac{1}{N_\a^2} \sum_{i,j=1}^{N_\a} |\by_{\a ij}|^2 \leq  \d \cK_\a + c \d \eta^2(t) +  \d \eta^2(t)  \cD_\a^{(2-\b)_+} \cP_\a. 
	\]
	The potential term is bounded by $\d \cP_\a$ as long as 
	$	2a \geq d(2-\b)_+$.
	Hence, 
	\begin{equation}\label{e:enX}
	\d \eta(t) |\cX_\a| \leq \d \cE_\a + c \eta^2(t).
	\end{equation}
	This shows that  
	\[
	\cE_\a + \d \eta(t) \cX_\a + 2c \eta^2(t) \sim \cE_\a + c \d \eta^2(t). 
	\]
	Let us now consider the derivative
	\begin{multline*}
	\cX_\a'  = \frac{1}{N_\a} \sum_{i=1}^{N_\a} |\bw_{\a i}|^2 +\frac{1}{N_\a^2} \sum_{i,k=1}^{N_\a} \by_{\a ik} \cdot \bw_{\a ki} \phi_{\a ki}  - \frac{1}{N_\a^2} \sum_{i,k=1}^{N_\a}  \by_{\a ik} \cdot \n U(\radial{\by_{\a ik}}) - \e R_\a(t) \cX_\a \\
	 = \cK_\a+A-B - \e R_\a(t) \cX_\a.
	\end{multline*}
	The gain term $B$, by convexity dominates the potential energy $B \geq \cP_\a$.  As to $A$:
	\[
	|A| \leq \frac{|\phi|_\infty}{2 \d^{1/2}\eta(t)} \cI_\a + \frac{\d^{1/2} \eta(t)}{2} \frac{1}{N_\a^2} \sum_{i,j=1}^{N_\a}|\by_{\a ij}|^2 \lesssim  \frac{1}{ \d^{1/2}\eta(t)} \cI + \d^{1/2} \eta(t) +  \d^{1/2} \eta(t) \cD_\a^{(2-\b)_+} \cP_\a.
	\]
	By requiring a more stringent assumption on parameters
	\[
	\a \geq d(2-\b)_+,
	\]
	we can ensure that the potential  term is bounded  by $\sim \e^{1/2} \cP$, which can be absorbed by the gain term. 
	
The inter-flock term in \eqref{e:enprecoer} helps abosrb the corresponding residual term $\e R_\a(t) \cX_\a$. Indeed,
\[
\begin{split}
	\e R_\a(t) \cX_\a & \leq  \frac{1}{2 \d \eta(t)} \e R_\a(t) \cK_\a + \e R_\a(t)\d \eta(t)  \sum_{i,j=1}^{N_\a}|\by_{\a ij}|^2 \\
	  & \leq  \frac{1}{2 \d \eta(t)} \e R_\a(t) \cK_\a  + C_1 \d \eta(t) + C_2 \d \eta(t) \cD_\a^{(2-\b)_+} \cP_\a,
\end{split}
\]
with the latter absored into the gain term as in the case of $A$. 
	
So far, we have obtained
	\begin{equation}\label{e:auxE1}
	\ddt ( \cE_\a + \d \eta(t) \cX_\a + 2c \eta^2(t) ) \leq - c_1 \d \eta(t) \cE + c_2 \eta^2(t) + \d \eta'(t) \cX_\a.
	\end{equation}
In view of \eqref{e:enX},
	\[
	| \d \eta'(t) \cX_\a | \leq \d \frac{1}{\lan t \ran} \eta(t) |\cX_\a| \leq  \d \frac{1}{\lan t \ran} \cE_\a +  \d \frac{\eta^2(t)}{\lan t \ran}.
	\]
	Since $a <1$, the energy term will be absorbed, and the free term is even smaller then $\eta^2$.  Denoting 
	\[
	E_\a =  \cE_\a + \d \eta(t) \cX_\a + 2c \eta^2(t) ,
	\]
	we obtain 
	\[
	\ddt E_\a \leq  - c_1 \eta(t) E_\a + c_2 \eta^2(t).
	\]
	By Duhamel's formula,
	\[
	E_\a(t) \lesssim  \exp\{ - \lan t \ran^{1-a}\} +   \exp\{ - \lan t \ran^{1-a}\} \int_0^t   \frac{e^{\lan s \ran^{1-a}}}{ \lan s \ran^{2 a}} \ds.
	\]
	By an elementary asymptotic analysis,
	\[
	\int_0^t   \frac{e^{\lan s \ran^{1-a'}}}{ \lan s \ran^{a''}} \ds \sim  \exp\{  \lan t \ran^{1-a'}\} \frac{1}{\lan t \ran^{ a''-a'}}.
	\]
	Thus, we obtain an algebraic decay rate
	\begin{equation}\label{e:drini}
	E_\a(t) \lesssim \frac{1}{\lan t \ran^{a}}, \quad \forall  a<1,
	\end{equation}
	provided
	\begin{equation}\label{e:dgb}
	d \g <1 \quad \text{ and }\quad   d (2- \b)_+ <1.
	\end{equation}
	This translates exactly into the conditions on $\g$ given by \eqref{e:gb}, and \eqref{e:drini}  automatically implies \eqref{e:driniE}
	
	Going back to the estimates \eqref{e:parE1} and \eqref{e:parE2}, but keeping the kinetic energy with its established decay, we obtain a new decay rate for the diameter
	\[
	\cD_\a \leq C_\d \lan t \ran^{\frac{d}{2} + \d }, \quad \forall \d>0.
	\]
	
	At the next stage we prove flocking: $\cD_\a(t)< \bar{D}_\a$. In order to achieve this we return again to the particle energy estimates. Let us denote
	\[
	\cP_{\a i} =  \frac{1}{N_\a} \sum_{k=1}^{N_\a}  U(\radial{\by_{\a ik}}), \quad \cI_{\a i} =  \frac{1}{N_\a} \sum_{k=1}^{N_\a}  \phi_{\a ik}| \bw_{\a ki}|^2, \quad \cX_{\a i} = \by_{\a i} \cdot \bw_{\a i}.
	\]
	Using \eqref{e:Ei1}, \eqref{e:Ei2}, \eqref{e:potS}, \eqref{e:sqP} and the fact that $\cD_\a^{(2-\b)_+} \cP$ has a negative rate of decrease, we obtain
	\begin{multline*}
			\ddt \cE_{\a i} \leq  \cK_\a  - \frac12 \phi_\a(\cD_\a) |\bw_{\a i}|^2  - \cI_{\a i} + c \sqrt{\cK_\a} - \e R_\a(t) |\bw_{\a i}|^2  \\
			\lesssim  - \frac12 \phi_\a(\cD_\a) |\bw_{\a i}|^2  - \cI_{\a i} + \frac{1}{\lan t \ran^{\frac12 - \d}} - \e R_\a(t) |\bw_{\a i}|^2, \quad \forall \d>0.
	\end{multline*}
		In view of \eqref{e:dgb}, we can pick $a$ and small $b$ such that 
	\begin{equation}\label{e:adsub}
	\begin{split}
	\frac{d \g}{2} + b \g  < \frac12 - 2 b &< a < \frac12 -  b \\
	(2-\b)_+ d + 2\d (2-\b)_+ & < 2 a.
	\end{split}
	\end{equation}
	We use as before the auxiliary rate function $\eta(t) =  \lan t\ran^{- a}$.  Let us estimate the corrector
	\[
	| \d \eta(t) \cX_{\a i} | \leq \e |\bw_{\a i}|^2 + \d \eta^2(t) |\by_{\a i}|^2 \leq \d |\bw_{\a i}|^2 + \d  \eta^2(t) \cD_\a^{2-\b} \cP_{\a i} + L^2 \d  \eta^2(t) \leq  \d |\bw_{\a i}|^2 + c \d \cP_{\a i} + L^2 \d  \eta^2(t).
	\]
	So, 
	\[
	E_{\a i} : = \cE_{\a i} + \d \eta(t) \cX_{\a i} + 2 L^2 \d  \eta^2(t) \sim \cE_{\a i}  + L^2 \d  \eta^2(t).
	\]
	Differentiating,
	\[
	\begin{split}
	\cX'_{\a i} & = |\bw_{\a i}|^2 + \frac{1}{N_\a}\sum_{k =1}^{N_\a} \by_{\a i} \cdot \bw_{\a ki} \phi_{\a ki}  -  \frac{1}{N_\a}\sum_{k =1}^{N_\a}  \by_{\a ik} \cdot \n U(\radial{\by_{\a ik}}) \\
	& +  \frac{1}{N_\a}\sum_{k =1}^{N_\a}  \by_{\a k} \cdot ( \n U(\radial{\by_{\a ik}}) -  \n U(\radial{\by_{\a i}})) -  \e R_\a(t) \cX_{\a i} \\
	& \leq   |\bw_{\a i}|^2  + \d^{1/2} \eta(t) |\by_{\a i}|^2 + \frac{1}{\d^{1/2} \eta(t)} \cI_{\a i}  - \cP_{\a i} +  \frac{1}{N_\a^2} \sum_{l,k=1}^{N_\a}  |\by_{\a kl}|^2 \\
	&+ \frac{1}{2 \d \eta(t)} \e R_\a(t)  |\bw_{\a i}|^2 + 2 \d \eta(t) \e R_\a(t) |\by_{\a i}|^2 \\
	\intertext{where the last term is already smaller than $\d^{1/2} \eta(t) |\by_{\a i}|^2$ for small enough $\d$,}
	& \leq  |\bw_{\a i}|^2  +  \d^{1/2} L^2 \eta(t)  +  \d^{1/2} \cD_\a^{(2-\b)_+} \eta(t) \cP_{\a i} +  \frac{1}{\d^{1/2}\eta(t)} \cI_{\a i}  - \cP_{\a i} + C + \frac{1}{2 \d \eta(t)} \e R_\a(t)  |\bw_{\a i}|^2\\
	\intertext{in view of \eqref{e:adsub}, $\e^{1/2} \cD^{(2-\b)_+} \eta(t)  \lesssim \e^{1/2}$, so the potential term is absorbed by $- \cP_i$,}
	& \leq  |\bw_{\a i}|^2 + \frac{1}{\eta(t)} \cI_{\a i} - \frac12 \cP_{\a i} + C + \frac{1}{2 \d \eta(t)} \e R_\a(t)  |\bw_{\a i}|^2.
	\end{split}
	\]
	Again in view of \eqref{e:adsub}, $\eta(t)$ decays faster than $ \phi_\a (\cD_\a)$, so plugging into the energy equation we obtain
	\[
	\ddt E_{\a i} \leq  - \d \eta(t) E_{\a i} +   \eta(t) +  \sqrt{\cK_\a}  + \d \eta'(t) \cX_{\a i},
	\]
	and as before $\d \eta'(t) \cX_{\a i}$ is a lower order term which is absorbed into the negative energy term and $+ \eta^2$. So,
	\[
	\ddt E_{\a i} \leq  - \d \eta(t) E_{\a i} +   \eta(t) +  \sqrt{\cK_\a}.
	\]
	By our choice of constants \eqref{e:adsub}, $\sqrt{\cK_\a}$ decays faster than $\eta(t)$, hence, 
	\[
	\ddt E_{\a i} \lesssim  - \d \eta(t) E_{\a i} +   \eta(t).
	\]
	This proves boundedness of $E_{\a i}$,  and hence that  of $\cE_{\a i}  + L^2 \d  \eta^2(t)$, and hence that of $\cE_{\a i}$. In view of \eqref{e:diamE}, this implies the flocking bound $	\cD_\a(t) < \overline{\cD}_\a$ for all $t>0$.
\end{proof}

It is interesting to note that when the support of the potential spans the entire line, $L =0$, and $U$ lands at the origin with at least a quadratic touch:
\begin{equation}\label{e:quadland}
U(r) \geq a_0 r^2, r<L',
\end{equation}
then we can establish exponential alignment in terms of the energy $\cE_\a$. Indeed, since we already know that the diameter is bounded, the basic energy equation reads
\[
\ddt \cE_\a \leq - c_0 \cK_\a - \frac12 \cI_\a.
\]
The momentum corrector needs only an $\d$-prefactor to satisfy the bound
\[
| \d \cX_\a | \leq \d \cK_\a + \d  c \cP_\a.
\]
This is due to the assumed quadratic order of the potential near the origin and, again, boundedness of the diameter.  Hence, $\cE_\a + \d \cX_\a \sim \cE_\a$. The rest of the argument is similar to the general case. We obtain
\[
\cX_\a \lesssim \cK_\a + \d^{1/2} \cP_\a + \frac{1}{\d^{1/2}} \cI_\a  - \cP_\a \leq  \cK_\a  - \frac12 \cP_\a \frac{1}{\d^{1/2}} \cI_\a 
\]
Thus,
\[
\ddt(  \cE_\a  + \d \cX_\a ) \leq - c_1 \cE_\a \sim - c_1 (  \cE_\a + \d \cX_\a ) .
\]
This proves exponential decay of the energy $\cE_\a $.  Going further to consider the individual particle energies, we discover similar decays. Indeed, denoting by  $\Exp$ any quantity that decays exponentially fast, we follow the same scheme:
\[
\ddt \cE_{\a i} \leq - c_1 |\bw_{\a i}|^2 - \frac12 \cI_{\a i}  + \Exp, \qquad \Exp\lesssim e^{-Ct}.
\]
In view of $|\by_{\a i}|^2 \lesssim \cP_{\a i}$, 
\[
\d |\cX_{\a i}| \leq  \d |\bw_{\a i}|^2 + \d \cP_{\a i},
\]
so $\cE_{\a i} + \d \cX_{\a i} \sim \cE_{\a i}$.  Further following the estimates as in the proof, 
\[
\cX_{\a i}' \lesssim  |\bw_{\a i}|^2  + \frac{1}{\d^{1/2}} \cI_{\a i} - \frac12 \cP_{\a i}.
\]
Thus,
\[
\ddt ( \cE_{\a i} + \d \cX_{\a i} ) \leq - c_1 ( \cE_{\a i} + \d \cX_{\a i} ) + \Exp.
\]
This establishes exponential decay for $\cE_{\a \infty}$, and hence for the individual velocities. This also proves that $\cD_\a(t) = \Exp$. So, the long time behavior here is characterized by exponential aggregation to a point. 

\begin{theorem}\label{t:aggr}
	Let us assume that the support of the potential spans the entire space and \eqref{e:quadland}.  Then the solutions aggregate exponentially fast:
	\[
	\cD_\a(t) + \max_{i}|\bv_{\a i}(t) - \bV_\a(t) |_\infty \leq C e^{-\d t},
	\]
	for some $C,\d>0$. 
\end{theorem}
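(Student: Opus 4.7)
The plan is to adapt the Lyapunov corrector method used in the proof of \thm{t:attr}, but to exploit two features of the current hypotheses that were not available there. First, \thm{t:attr} already delivers the uniform-in-time flocking bound $\cD_\a(t) \leq \overline{\cD}_\a$, so $\phi_\a(\cD_\a(t)) \geq c_* > 0$ for all $t$ and the alignment dissipation $\cI_\a$ acquires genuine coercivity over $\cK_\a$. Second, the quadratic lower bound \eqref{e:quadland}, together with $L=0$, now gives the pointwise control $|\by_{\a ij}|^2 \lesssim \cP_\a$ on all of $\R_+$. These two facts together allow the auxiliary decaying weight $\eta(t)=\lan t\ran^{-a}$ used in the proof of \thm{t:attr} to be replaced by a small positive \emph{constant} $\d$, which is precisely what upgrades the algebraic rate to exponential.

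First I would run the total-energy argument. The basic identity \eqref{e:enlawU}, with $R_\a \geq 0$ discarded and $\phi_\a(\cD_\a) \geq c_*$ inserted, gives $\ddt \cE_\a \leq - c_0 \cK_\a - \tfrac12 \cI_\a$. I then add a constant-weight longitudinal corrector $\d \cX_\a$ and check $|\d \cX_\a| \leq \d \cK_\a + \d c \cP_\a$, where the second term uses \eqref{e:quadland} to convert $|\by_{\a i}|^2$ into a fraction of $\cP_\a$; this yields $\cE_\a + \d \cX_\a \sim \cE_\a$ without any time-decay in the prefactor (contrast with \eqref{e:enX}). Differentiating $\cX_\a$ and using convexity of $U$ to extract the coercive $-\cP_\a$ term, small-$\d$ Young inequalities absorb the cross-terms and produce $\ddt(\cE_\a + \d \cX_\a) \leq - c_1 (\cE_\a + \d \cX_\a)$. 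Gronwall then yields exponential decay of $\cE_\a$, and hence of $\cK_\a$ and $\cP_\a$.

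The second stage lifts this decay from the averaged quantities to the $L^\infty$ quantities appearing in the statement. I would repeat the scheme with the particle-level objects $\cE_{\a i}$, $\cI_{\a i}$, $\cP_{\a i}$, $\cX_{\a i}$ from the proof of \thm{t:attr}. The particle-energy equation picks up additional error contributions from the alignment sum and from the gradient-difference term $\n U(\radial{\by_{\a ik}}) - \n U(\radial{\by_{\a i}})$, but since $\cK_\a$ is already known to decay exponentially, each such error is dominated by a quantity $\Exp$ that decays exponentially fast. The pointwise bound $|\by_{\a i}|^2 \lesssim \cP_{\a i}$ again gives $\cE_{\a i} + \d \cX_{\a i} \sim \cE_{\a i}$ with a constant prefactor, and the differentiation step yields $\ddt(\cE_{\a i} + \d \cX_{\a i}) \leq - c_1 (\cE_{\a i} + \d \cX_{\a i}) + \Exp$. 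Gronwall then controls $\cE_{\a \infty}$ exponentially, and via \eqref{e:diamE} this translates into the claimed exponential decay of both $\cD_\a$ and $\max_i |\bv_{\a i} - \bV_\a|$.

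The main obstacle will be the bookkeeping in the particle-level step: one must verify that the cross terms $\frac{1}{N_\a}\sum_k \by_{\a i}\cdot \bw_{\a ki}\phi_{\a ki}$ and $\frac{1}{N_\a}\sum_k \by_{\a k}\cdot(\n U(\radial{\by_{\a ik}}) - \n U(\radial{\by_{\a i}}))$ split cleanly against a \emph{constant} prefactor $\d$, leaving only a dissipative residue plus an $\Exp$ term. This is exactly where the hypotheses $L=0$ and \eqref{e:quadland} are indispensable: without them the quadratic control $|\by_{\a i}|^2 \lesssim \cP_{\a i}$ degenerates on the flat part of $U$, a time-decaying prefactor $\eta(t)$ becomes unavoidable, and one is pushed back to the algebraic rate of \thm{t:attr}.
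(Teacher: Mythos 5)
Your proposal is correct and follows essentially the same route as the paper: use the uniform diameter bound from Theorem~\ref{t:attr} to make $\cI_\a$ coercive over $\cK_\a$, use \eqref{e:quadland} with $L=0$ to bound $|\by_{\a i}|^2\lesssim \cP_{\a i}$ so the corrector $\cX_\a$ (resp.\ $\cX_{\a i}$) carries a constant prefactor $\d$ rather than the decaying weight $\eta(t)$, and then run the same two-stage Gronwall argument, first for $\cE_\a$ and then for the particle energies $\cE_{\a i}$ with exponentially decaying error terms. Your identification of exactly where $L=0$ and the quadratic landing enter matches the paper's reasoning.
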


\section{Hydrodynamics of multi-flocks}\label{sec:hydro}

In the case of smooth communication kernels, one can formally derive the corresponding kinetic model from \eqref{e:CS} via the BBGKY hierarchy. Let $f_\a(x,v,t)$ denote a density distribution of the $\a$-flock, and define the corresponding flock parameters :
\begin{equation}\label{e:macrohydro}
\begin{split}
M_\a  &= \int_{\R^{2d} } f_\a(\bx,\bv,t) \dbx \dbv, \quad \bX_\a =  \frac{1}{M_\a} \int_{\R^{2d}} \bx f_\a(\bx,\bv,t) \dbx \dbv,\\
\bV_\a  &= \frac{1}{M_\a} \int_{\R^{2d}} \bv f_\a(\bx,\bv,t) \dbx \dbv.
\end{split}
\end{equation}
The kinetic model reads as follows
\begin{equation}\label{e:kin}
\p_t f_\a + \bv \cdot \n_{\bx} f_\a + \l \n_{\bv}\cdot \bQ_\a (f_\a,f_\a) + \e \n_\bv \cdot \left[ \sum_{\b \neq \a} M_\b \psi(\bX_\a,\bX_\b)  (\bV_\b - \bv ) f_\a \right] = 0,
\end{equation}
where 
\begin{equation}\label{eq:STQ}
\bQ_\a (f,f)(\bx,\bv,t) = f(\bx,\bv,t) \int_{\R^{2d}}
\phi_\a(\bx,\bx') (\bv'-\bv) f(\bx',\bv,t') \dbx'\dbv' .
\end{equation}
The macroscopic system can be obtained, again formally, from \eqref{e:kin} by considering monokinetic closure $f_\a = \d_0(\bv-\bu_\a(\bx,t)) \rho_\a(\bx,t)$. The resulting system presents as hybrid of hydrodynamic and discrete parts, where the hydrodynamic part corresponds to the classical CS dynamics within flocks, while the discrete part governs communication of a given flock with other flocks' averaged quantities.  To write down the equations, we denote macroscopic variables by $(\rho_\a,\bu_\a)_{\a=1}^A$,
\[
\rho_\a(\bx,t) = \int_{\R^{d} } f_\a(\bx,\bv,t) \dbv, \quad \rho_\a \bu_\a = \int_{\R^{d} } \bv f_\a(\bx,\bv,t) \dbv,
\]
while \eqref{e:macrohydro} represent upscale parameters of the flocks.  The full hydrodynamic system  reads
\begin{equation}\label{e:CShydro}
\left\{
\begin{split}
\p_t \rho_\a + \n \cdot (\bu_\a \rho_\a) & = 0\\
\p_t {\bu}_{\a} + \bu_\a \cdot \n \bu_\a & =\l_\a \int_{\R^d} \phi_\a(\bx,\by)(\bu_\a(\by) - \bu_\a(\bx)) \rho_\a(\by) \dby  \\
&+ \e \sum_{\b \neq \a} M_\b \psi(\bX_\a ,\bX_\b)[ \bV_\b - \bu_\a(\bx,t) ].
\end{split}\right.  \qquad \a = 1,\ldots,A.
\end{equation}
Writing the momentum equation in conservative form we obtain
\begin{equation}\label{eq:moment}
\begin{split}
\p_t ( \rho_\a \bu_{\a}) + \n_x (\rho_\a \bu_\a \otimes \bu_\a )  &=\l_\a \int_{\R^d} \phi_\a(\bx,\by)(\bu_\a(\by) - \bu_\a(\bx)) \rho_\a(\bx) \rho_\a(\by) \dby \\
& \ \ \ +\mu \sum_{\b \neq \a} M_\b \psi(\bX_\a ,\bX_\b)[ \bV_\b - \bu_\a(\bx,t) ]\rho_\a(\bx).
\end{split}
\end{equation}
Integrating \eqref{eq:moment} over $\R^d$, system \eqref{e:CShydro} upscales to the same  discrete Cucker-Smale system \eqref{e:CSmacro} for  macroscopic parameters $\{ \bX_\a, \bV_\a\}_\a$.

\subsection{Slow and fast alignment of hydrodynamic multi-flocks}
As in the discrete case, we will deal with kernels that admit fat tail subkernels \eqref{e:subk}. Alignment dynamics for hydrodynamic description mimics that of the discrete one once we pass to Lagrangian coordinates. 
Denote  by $\bx_\a(\bx,t)$ the characteristic flow map of the $\bu_\a$. From the continuity equation we conclude that the mass measure $\rho_\a(\by,t) \dby$ is the push-forward of the initial measure $\rho_\a(\by,0) \dby$ by the flow. So, passing to the Lagrangian coordinates $\bv_\a(\bx,t) = \bu_\a(\bx_\a(\bx,t),t)$ we obtain
\begin{multline*}
\ddt {\bv}_{\a}=\l_\a \int_{\R^d} \phi_\a(\bx_\a(\bx,t),\bx_\a(\by,t))(\bv_\a(\by) - \bv_\a(\bx)) \rho_\a(\by,0) \dby \\ + \mu \sum_{\b \neq \a} M_\b \psi(\bX_\a ,\bX_\b)[ \bV_\b - \bv_\a(\bx,t) ].
\end{multline*}
Passing to the reference frame moving with the average velocity in each flock:
\begin{equation}
\begin{split}
\bw_\a(\bx,t) := \bv_\a(\bx,t) - \bV_\a(t)
\end{split}
\end{equation}
we obtain the momentum system quite similar to its discrete counterpart \eqref{e:CSshift}
\[
\ddt \bw_\a(\bx,t) = \l_\a \int_{\R^d} \phi_\a(\bx_\a(\bx,t),\bx_\a(\by,t))(\bw_\a(\by,t) - \bw_\a(\bx,t)) \rho_\a(\by,0) \dby   - \e R_\a(t) \bw_\a.
\]
Thus, all the alignment statements of \thm{t:fast} and \thm{t:slow} carry over directly to these settings. In the original variables these translate into the following.  
\begin{theorem}\label{t:hydroalign}
Assuming that the initial diameter of the $\a$-flock is finite, and $\phi_\a$ has fat tail, the $\a$-flock aligns at a rate dependent on $\l_\a$ :
\[
  \diam \left( \supp \rho_\a(\cdot,t) \right) < \overline{\cD}_\a, \quad  \max_{\bx\in \supp \rho_\a(\cdot,t)} |\bu_\a(\bx,t) - \bV_\a(t)| \lesssim e^{-\d_\a t},
   \]
where $ \d_\a = \l_\a M_\a \phi_\a(\overline{\cD}_\a)$. 
Furthermore, if $\psi$ has a fat tail, the kernels $\phi_\a \geq 0$ are arbitrary, and the multi-flock has a finite diameter initially, then global alignment occurs at a rate dependent on $\mu$:
\[
\diam \left( \cup_\a \supp \rho_\a(\cdot,t) \right) < \overline{\cD}, \quad  \max_{\bx\in \supp \rho_\a(\cdot,t), \a = 1,...,A} |\bu_\a(\bx,t) - \bV| \lesssim e^{-\d t },
 \]
 where $\d = \mu M \psi(\overline{\cD})$.
\end{theorem}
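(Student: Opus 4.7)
The plan is to transport the discrete proofs of \thm{t:fast} and \thm{t:slow} to the continuum via the Lagrangian framework set up just above the theorem. I would first introduce Lagrangian flock diameters and amplitudes
\[
\cD_\a(t) := \diam(\supp \rho_\a(\cdot,t)), \qquad \cA_\a(t) := \sup_{\bx,\by \in \supp \rho_\a(\cdot,t)} |\bu_\a(\bx,t) - \bu_\a(\by,t)|,
\]
together with their macroscopic counterparts $\cD(t) := \max_{\a,\b} |\bX_\a - \bX_\b|$ and $\cA(t) := \max_{\a,\b} |\bV_\a - \bV_\b|$. Because the characteristic flow is smooth and the initial support is compact, all four quantities are locally Lipschitz in $t$, and the suprema are attained, so Rademacher/Danskin-style differentiation is legitimate.

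Next I would differentiate $\cA_\a$ at a maximizing triple $(\bx^*,\by^*,\boldsymbol{\ell})$. In the co-moving frame $\bw_\a = \bu_\a - \bV_\a$ of the excerpt, the Lagrangian ODE reads
\[
\ddt \bw_\a(\bx,t) = \l_\a \int \phi_{\a,\bx,\by}\bigl(\bw_\a(\by,t)-\bw_\a(\bx,t)\bigr) \rho_\a(\by,0)\,\dby - \e R_\a(t) \bw_\a,
\]
so the computation is a verbatim transcription of the discrete one leading to \eqref{e:VDVDa}, with the finite sums $\sum_k m_{\a k}(\cdots)$ replaced by integrals $\int(\cdots)\rho_\a(\by,0)\,\dby$ of total mass $M_\a$. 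The same two algebraic splittings $\langle \boldsymbol{\ell}, \bw(\by)-\bw(\bx^*)\rangle = \langle \boldsymbol{\ell}, \bw(\by)-\bw(\by^*)\rangle - \cA_\a$ and its mirror image produce two nonpositive integrands, so one may replace $\phi_\a$ with its minimum $\phi_\a(\cD_\a)$ and obtain the closed pair
\[
\dot\cA_\a \le -\l_\a M_\a \phi_\a(\cD_\a)\cA_\a - \e R_\a(t)\cA_\a, \qquad \dot\cD_\a \le \cA_\a.
\]
Since the macroscopic variables $\{(\bX_\a,\bV_\a)\}$ satisfy exactly the discrete master-equation \eqref{e:CSmacro}, the same argument applied at the macroscopic level reproduces the last two inequalities of \eqref{e:VDVDa}. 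Thus the full system \eqref{e:VDVDa} governs the hydrodynamic multi-flock.

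From here I would simply invoke the Ha--Liu Lyapunov analysis already used in \sect{sec:rate}. For the fast/local statement, dropping the nonnegative term $\e R_\a \cA_\a$ decouples the $\a$-flock; the fat-tail assumption on $\phi_\a$ together with \eqref{e:fat-sid} delivers $\cD_\a \le \overline{\cD}_\a$ and $\cA_\a \lesssim e^{-\d_\a t}$ with $\d_\a = \l_\a M_\a \phi_\a(\overline{\cD}_\a)$. Since $\bV_\a$ is a $\rho_\a$-weighted average of $\bu_\a$, a component-wise minimax gives $\max_\bx|\bu_\a(\bx,t)-\bV_\a(t)| \le \cA_\a(t)$, which is the first claim. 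For the slow/global statement, the macroscopic subsystem $(\cA,\cD)$ is decoupled from the inner dynamics and the fat-tail hypothesis on $\psi$ yields $\cD \le \overline{\cD}$ and $\cA \lesssim e^{-\d t}$ with $\d = \e M \psi(\overline{\cD})$; in particular $|\bV_\a - \bV| \lesssim e^{-\d t}$. Simultaneously, boundedness of $\cD$ forces $R_\a(t) \ge M\psi(\overline{\cD})$ for every $\a$, so even ignoring the $\phi_\a$-term one obtains $\dot\cA_\a \le -\e M\psi(\overline{\cD})\cA_\a$ and hence $\cA_\a \lesssim e^{-\d t}$ regardless of the inner kernels. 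The triangle inequality $|\bu_\a - \bV| \le |\bu_\a - \bV_\a| + |\bV_\a - \bV|$ then yields the global claim.

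The main technical subtlety is the rigorous differentiation of the suprema $\cA_\a(t)$ and $\cD_\a(t)$, whose maximizers may jump in time. I would handle this by working in the fixed Lagrangian domain $\supp \rho_\a(\cdot,0)$ (which is compact) and applying Danskin's envelope theorem to the $C^1$-in-$t$ family $(\bx,\by,\boldsymbol{\ell}) \mapsto \langle \boldsymbol{\ell}, \bw_\a(\bx,t)-\bw_\a(\by,t)\rangle$; alternatively, a particle approximation argument (taking $N_\a$-particle empirical discretizations of $\rho_\a(\cdot,0)$ and passing to the limit using Dobrushin-type stability) would transfer the discrete ODIs to the continuum. Everything else is a routine transcription of the discrete proofs.
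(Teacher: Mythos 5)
Your proposal is correct and follows essentially the same route as the paper: pass to Lagrangian coordinates via the characteristic flow, note that $\rho_\a(\cdot,t)\,\dby$ is the push-forward of the initial mass measure, shift to the co-moving frame $\bw_\a = \bu_\a - \bV_\a$, and observe that the discrete ODI system \eqref{e:VDVDa} and the Ha--Liu Lyapunov argument carry over verbatim with sums replaced by integrals against $\rho_\a(\by,0)\,\dby$. The paper states this transfer in one line without elaborating; your filling in of the maximizing-pair differentiation and the Rademacher/Danskin justification is a faithful (and more careful) rendering of the same argument.
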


\subsection{External forcing}
Theorems \ref{t:attr} and \ref{t:aggr}  have similar analogues for the system with additional  external interaction forces \cite{ST2019a}
\[
\bF_\a =  - \n_\bx U \ast \rho_\a.
\]
This is due to the fact that our arguments establish rates independent of the number of agents. The hydrodynamic proofs repeat the discrete case ad verbatim, we therefore leave them out entirely.

\subsection{Global existence and 1D multi-flocking: smooth kernel case}\label{sec:1Dsmooth}
We restrict attention to radial communication kernels $\phi_\a, \psi \in W^{2,\infty}$. The most convenient form of \eqref{e:CShydro} to study regularity is in the shifted reference frame attached to the flock:
\[
v_\a(x,t) := u_\a(x - X_\a(t),t) - V_\a(t), \qquad r_\a := \rho_\a(x - X_\a(t),t).
\]
The new pair satisfies
\begin{equation}\label{e:CShydro1D}
\left\{
\begin{split}
\p_t r_\a + (v_\a r_\a)' & = 0\\
\p_t {v}_{\a} + v_\a v_\a' & =\l_\a \int_{\R^d} \phi_\a(\radial{x-y})(v_\a(y) - v_\a(x)) r_\a(y) \dy   - \e R_\a(t) v_\a ,\\
R_\a(t) & = \sum_{\b \neq \a} M_\b \psi(\radial{X_\a -X_\b}).
\end{split}\right. 
\end{equation}

    In the case of the classical hydrodynamic alignment system the global well-posedness in 1D  relies on a threshold condition for the auxiliary quantity $e = v' + \phi \ast \rho$, which satisfies the same continuity as the density, see \cite{TT2014}. For multi-flocks we define, accordingly,  the family of such quantities 
\[
e_\a(x,t) = v'_\a + \l_\a \phi_\a \ast r_\a.
\]
By virtue of \eqref{e:CShydro1D}, $e_\a$ satisfies
\[
\p_t e_\a + (v_\a e_\a)' = -\e R_\a(t) v'_\a,
\]
which can be written as a non-autonomous logistic equation along characteristics of $v_\a$:
\begin{equation}\label{e:log}
	\ddt {e}_\a =  (\e R_\a + e_\a)( \phi_\a \ast r_\a-e_\a ), \qquad \ddt:=\p_t +v_\a\p_x.
\end{equation}
It is therefore natural to a expect threshold condition to guarantee global existence. We elaborate on that in the next result.  
\begin{theorem}[Global existence]\label{t:hydro1D}
Let $\psi, \phi_\a \in W^{2,\infty}(\R)$. For any initial conditions $(u_\a,\rho_\a) \in W^{2,\infty}  \times (W^{1,\infty} \cap L^1) $ satisfying
	\begin{equation}\label{e:thres}
 u'_\a(x,0) + \l_\a \phi_\a \ast \rho_\a(x,0) \geq 0 \quad \mbox{ for all } \ x\in \R, \ \a = 1,\ldots,A
	\end{equation}
there exists a unique global solution  $(u_\a,\rho_\a)\in L^\infty_\loc([0,\infty); W^{2,\infty}  \times (W^{1,\infty}\cap L^1))$.  On the other hand, if for some $x_0\in \R$ and $\a\in \{1,...,A\}$
\begin{equation}\label{e:threshill}
u'_\a(x_0,0) + \l_\a \phi_\a \ast \rho_\a(x_0,0) <  - \e M \psi(0),
\end{equation}
then the solution develops a finite time blowup.
\end{theorem}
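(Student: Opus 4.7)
I will treat the two assertions separately, with \eqref{e:log} as the main driver in both directions; the logistic structure of $e_\a := v'_\a + \l_\a \phi_\a\ast r_\a$ along characteristics of $v_\a$ plays the role that the Tadmor--Tan critical threshold plays in the mono-flock case, with the additional $\e R_\a$ term acting as a time-dependent shift.

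\emph{Local existence} in $W^{2,\infty}\times(W^{1,\infty}\cap L^1)$ will follow from a standard Picard iteration applied to \eqref{e:CShydro1D}, since $\phi_\a,\psi\in W^{2,\infty}$ and the nonlocal terms preserve Sobolev regularity. Mass is conserved along the continuity equation, and by the usual continuation criterion global existence reduces to obtaining locally uniform bounds for $\|v'_\a\|_\infty$ and $\|r_\a\|_{W^{1,\infty}}$; higher derivatives are then controlled by linear transport equations along characteristics whose coefficients depend only on these quantities.

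\emph{Sub-critical threshold.} Under \eqref{e:thres} one has $e_\a(\cdot,0)\geq 0$, and I will propagate this in time by comparison in \eqref{e:log}: at $e_\a=0$ the right-hand side equals $\e R_\a\cdot \phi_\a\ast r_\a\geq 0$ (since $R_\a,\phi_\a\ast r_\a\geq 0$), so the zero barrier is never crossed. Mass conservation $\phi_\a\ast r_\a\leq \|\phi_\a\|_\infty M_\a$ supplies the upper barrier: once $e_\a$ exceeds this constant the right-hand side of \eqref{e:log} becomes non-positive, yielding
\[
0\leq e_\a(x,t)\leq E^*_\a := \max\{\|e_\a(\cdot,0)\|_\infty,\; \|\phi_\a\|_\infty M_\a\}.
\]
Because $v'_\a = e_\a - \l_\a \phi_\a\ast r_\a$, this bounds $v'_\a$ uniformly in $(x,t)$, the Lagrangian ODE $\ddt r_\a = -v'_\a r_\a$ then bounds $\|r_\a(t)\|_\infty$ on every finite interval, and one more differentiation controls $v''_\a$ and $r'_\a$; the continuation principle yields $T^*=\infty$.

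\emph{Super-critical threshold (blowup).} I will pick $\a$ and $x_0$ with $e_\a(x_0,0)<-\e M\psi(0)$ and track $s(t):=e_\a(x(t),t)$ along the characteristic of $v_\a$ through $x_0$. Since $\psi$ attains its supremum at the origin and $\sum_{\b\neq\a}M_\b\leq M$, one has the time-uniform ceiling $\e R_\a(t)\leq \e M\psi(0)$, so $s(0)+\e R_\a(0)<0$. While $s+\e R_\a<0$ the logistic equation \eqref{e:log} gives $\dot s = (s+\e R_\a)(\phi_\a\ast r_\a - s)<0$, so $s$ decreases; combined with the ceiling on $\e R_\a$, the gap $-s-\e M\psi(0)$ stays positive for all $t$ in the life-span. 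Setting $w(t):= -s(t)-\e M\psi(0)>0$ and dropping the non-negative term $\phi_\a\ast r_\a$ I obtain
\[
\dot w \;\geq\; (-s-\e M\psi(0))(-s) \;\geq\; w^2,
\]
a Riccati inequality forcing $w\to\infty$ by time $1/w(0)$. Correspondingly $v'_\a(x(t),t)=s-\l_\a\phi_\a\ast r_\a\to-\infty$, which through $\ddt r_\a = -v'_\a r_\a$ also drives $r_\a(x(t),t)\to\infty$, contradicting any extension beyond the Riccati time. The main obstacle in this part will be justifying the time-uniform dominance $\e R_\a(t)\leq\e M\psi(0)$ inside the characteristic argument; this rests on the (standard) feature that a radial communication kernel $\psi\geq 0$ attains its supremum at the origin, which is implicit in the smooth-kernel setting of the theorem.
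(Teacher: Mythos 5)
Your proposal is correct and follows essentially the same route as the paper: both hinge on the logistic equation \eqref{e:log} for $e_\a=v_\a'+\l_\a\phi_\a\ast r_\a$ along characteristics, with the barriers $0$ and $|\phi_\a|_\infty M_\a$ giving a uniform bound on $v_\a'$ in the sub-critical case (then density and higher derivatives by transport/bootstrap), and a Riccati inequality driven by the uniform ceiling $\e R_\a\le \e M\psi(0)$ forcing finite-time blowup in the super-critical case. Your additive shift $w=-e_\a-\e M\psi(0)$ is only a cosmetic variant of the paper's multiplicative $(1+\d)$ trick, and the concluding claim that $r_\a\to\infty$ is unnecessary (the divergence of $v_\a'$ already precludes continuation) but harmless.
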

The gap between the threshold levels is due to the fact that it is hard to predict the dumping coefficient $\e R_\a(t)$, which may fluctuate in time. In particular, if $\psi$  has a fat tail, then the argument below shows that the threshold for global existence is improvable to 
		\begin{equation}\label{e:thresD}
	e_\a(x,0) \geq - \e  M \psi(\overline{\cD}) \quad \mbox{ for all } x\in \R, \ \a = 1,\ldots,A,
	\end{equation}
where $\bar{D}$ is determined from the initial conditions by equation \eqref{e:Dinfty}:
	\begin{equation}\label{e:Dinfty}
\e \int_{\cD_0}^{ \overline{\cD} } \psi (r) \dr =  \cA_0.
\end{equation}

\begin{proof} Let us start with the negative result.   Noting that $ \e M \psi(0)$ is the global upper  bound  for $\e R_\a$, from \eqref{e:log} we conclude that $\ddt e_\a \leq 0$. So, $e_\a$ will remain below $-\e (1+\d) M \psi(0)$ for some $\d>0$ along the characteristics starting at $x_0$. Hence,
\[
\ddt {e}_\a \leq  \frac{\d}{1+\d} e_\a ( \phi_\a \ast r_\a-e_\a )\lesssim -e_\a^2.
\]
Hence, $e_\a$ blows up in finite time.

On the other hand, if \eqref{e:thres} holds initially, then since 
\[
 e_\a( \phi_\a \ast r_\a-e_\a ) \leq	\dot{e}_\a \leq  (\e R_\a + e_\a)(|\phi_\a|_\infty M_\a -e_\a ),
\]
$e_\a$ will remain non-negative and asymptotically bounded from above by $|\phi_\a|_\infty M_\a$. Hence, $\|v_\a'\|_\infty$ is uniformly bounded.  Next, solving the continuity equation along characteristics
\[
r_\a(x_\a(x_0;t),t) = r_\a(x_0,0) \exp\left\{- \int_0^t v_\a'(x_\a(x_0;s),s) \ds \right\},
\]
we conclude that $r_\a$ remains a priori bounded on any finite time interval.  

Next, differentiating the $e$-equation,
\[
\ddt e_\a' + v_\a''e_\a + 2v'_\a e'_\a + v_\a e''_\a = - \e R_\a v_\a'',
\]
passing to Lagrangian coordinates and replacing $v_\a'' = e_\a' - \l_\a \phi_\a' \ast \rho_\a$ we obtain, in view of already known information,
\[
 \ddt |e_\a' |^2 \leq f(t) |e_\a'|^2 + g(t),
\]
where $f$ and $g$ are  bounded functions. Hence, $e'_\a$ remain bounded as well, and consequently so does $v''_\a$.  Finally, $r'_\a \in L^\infty$ follows from differentiating and integrating the continuity equation.

The obtained a priori estimates lead to construction of global solutions by the standard approximation and continuation argument.

\end{proof}

We proceed with two strong flocking results that demonstrate alignment in cluster with inter-flock slow and inner-flock fast rates as expected.

\begin{theorem}[{\bf Strong flocking}]\label{t:faststrong}
	Suppose the threshold condition \eqref{e:thres} holds so the solution exists globally. If for some $\a \in \{1,\ldots,A\}$ the $\a$-flock has compact support and the kernel $\phi_\a$ has a fat tail,  then there exists $ \d_\a = \d_\a(\phi_\a, \l_\a ,u_\a(0),\rho_\a(0))$ such that 
	\[
	\sup_{x \in \supp \rho_\a(\cdot,t)}|u_\a (x,t) - U_\a(t)| + |u_\a'(x,t)|+ |u_\a''(x,t)|  \lesssim  e^{-\d_\a t},
	\]
and the density $\rho_\a$ converges to a traveling wave with profile $\bar{\rho}_\a$ in the metric of $C^\g$ for any $0<\g<1$:
\[	
		\|\rho_\a(\cdot,t) - \bar{\rho}_\a(\cdot - X_\a(t)) \|_{C^\gamma} \lesssim e^{-\d_\a t}.
\]

Furthermore, if $\psi$ has a fat tail, the kernels $\phi_\a \geq 0$ are arbitrary, and the multi-flock has a finite diameter initially, then global alignment occurs at a rate $\d = \d(\psi, \e,u(0),\rho(0))$:
\[
\begin{split}
\sup_{x \in \supp \rho_\a(\cdot,t), \a = 1,...,A}|u_\a (x,t) - U| + |u_\a'(x,t)|+ |u_\a''(x,t)|  &\lesssim e^{-\d t}, \\
\|\rho_\a(\cdot,t) - \bar{\rho}_\a(\cdot - U t) \|_{C^\gamma} & \lesssim e^{-\d t}.
\end{split}
\]
\end{theorem}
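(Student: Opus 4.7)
The plan is to combine the exponential velocity alignment supplied by \thm{t:hydroalign} with a logistic-type analysis of the auxiliary quantity $e_\a = v'_\a + \l_\a \phi_\a \ast r_\a$ in the shifted frame \eqref{e:CShydro1D}. For the first conclusion, \thm{t:hydroalign} applied to the $\a$-flock already provides the bounded support $\diam(\supp \rho_\a(\cdot, t)) \leq \overline{\cD}_\a$ and the exponential decay $|v_\a(x,t)| \lesssim e^{-\d_\a t}$ of the velocity in the shifted frame; the second conclusion of the theorem proceeds identically after replacing $V_\a$ by $V$ and $\d_\a$ by $\d$, so I focus on the inner-flock case.

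Let $X_\a(t,\alpha)$ denote the Lagrangian flow of $v_\a$ starting at $\alpha \in \supp r_\a(\cdot,0)$. Since $|\p_t X_\a(t, \alpha)| = |v_\a(X_\a(t, \alpha), t)| \lesssim e^{-\d_\a t}$, the map $X_\a(t, \cdot)$ converges exponentially to a limit $X_\infty(\cdot)$, and the pushforward of $r_\a(\cdot,0)$ by $X_\infty$ defines the traveling profile $\bar{\rho}_\a$. Consequently $q_\a := \l_\a \phi_\a \ast r_\a$ stabilizes to a nontrivial limit $\bar{q}_\a$ with $|\dot{q}_\a| \lesssim e^{-\d_\a t}$ along characteristics; note that $\bar{q}_\a \geq \l_\a \phi_\a(\overline{\cD}_\a) M_\a > 0$ on the support by fat-tail positivity. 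To obtain $|v'_\a| \lesssim e^{-\d_\a t}$, I rewrite \eqref{e:log} as a linear ODE for $z_\a := q_\a - e_\a$ along characteristics,
\[
\ddt z_\a = -(\e R_\a + e_\a) z_\a + \dot{q}_\a .
\]
The threshold condition \eqref{e:thres} preserves $e_\a \geq 0$, and comparison with the logistic equilibrium $e_\a \equiv q_\a$ traps $e_\a$ in a positive interval $[c_0,C]$ after an initial phase. A Gr\"{o}nwall/Duhamel estimate then yields $|z_\a| = |v'_\a| \lesssim e^{-\d_\a t}$.

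The $C^2$-bound $|v''_\a| \lesssim e^{-\d_\a t}$ follows by differentiating the $e$-equation in $x$: the resulting linear equation for $e'_\a$ has bounded coefficients (thanks to the previous step) and an inhomogeneity controlled by $|\phi'_\a \ast r_\a|\cdot|v_\a|$, which decays exponentially, so one further Duhamel estimate closes the loop. For the density profile, the Lagrangian representation $r_\a(X_\a(t,\alpha), t) = r_0(\alpha)/\p_\alpha X_\a(t,\alpha)$ together with $\p_t \p_\alpha X_\a = v'_\a \, \p_\alpha X_\a$ and the decay of $v'_\a$ gives exponential convergence of $r_\a$ to $\bar{r}_\a$ in Lagrangian, hence Eulerian, $L^\infty$-norm; interpolation against the uniform $W^{1,\infty}$ bound from \thm{t:hydro1D} produces the asserted $C^\gamma$ rate $\lesssim e^{-(1-\gamma)\d_\a t}$. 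The main obstacle is establishing the uniform lower bound $\e R_\a + e_\a \geq c_0 > 0$ along \emph{all} characteristics: without such a bound the logistic equation lacks exponential contraction at potentially degenerate points where $e_\a(\cdot,0) = 0$ and $\psi$ vanishes at infinity. One resolves this by carefully trapping the pair $(e_\a,q_\a)$ in a positive invariant rectangle using the monotonicity structure of the logistic ODE together with the strict lower bound on $q_\a$ inherited from the fat tail of $\phi_\a$ on the bounded flock support.
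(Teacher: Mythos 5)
Your proposal is correct and follows essentially the same route as the paper's proof: exponential velocity alignment imported from \thm{t:hydroalign}, a uniform positive lower bound on the damping coefficient $\e R_\a + e_\a$ extracted from the logistic structure of the $e_\a$-equation together with $\phi_\a \ast r_\a \geq M_\a\phi_\a(\overline{\cD}_\a)>0$ on the bounded support (and from $\e R_\a \geq \e M\psi(\overline{\cD})$ in the global case), Duhamel/Gr\"onwall estimates for $v_\a'$ and $v_\a''$ --- note that your forcing $\dot q_\a$ is, via the continuity equation, exactly the paper's term $\int\phi_\a'(x-y)(v_\a(x)-v_\a(y))r_\a(y)\dy$ --- and a Lagrangian argument for the convergence of the density to a traveling profile. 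The only small correction: the uniform-in-time $W^{1,\infty}$ bound on $\rho_\a$ needed for the $C^\g$ interpolation is not supplied by \thm{t:hydro1D} (whose bounds are only locally bounded in time); it must be, and easily is, derived from $\ddt r_\a' = -2v_\a' r_\a' - v_\a'' r_\a$ once the exponential decay of $v_\a'$ and $v_\a''$ is in hand.
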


\begin{proof} Let us prove the local statement first. Note that the alignment itself is a consequence of \thm{t:hydroalign}. Plus we have a global bound $\overline{\cD}_\a$ on the diameter of the $\a$-flock.   Next, let us make the following observation: since 
	\[
	\phi_\a \ast \rho_\a(x) \geq M_\a \phi_\a(\overline{\cD}_\a) = c_0, \quad \forall x\in \supp r_\a,
\]
then from \eqref{e:log} we obtain
\[
\ddt {e}_\a \geq  e_\a( c_0 -e_\a ).
\]
Consequently, there exists a time $t_0$ starting from which $e_\a(x) \geq c_0/2$ for all $x\in \supp r_\a$. This follows by direct solution of the ODI.

Let us now write the equation for $v_\a'$
\begin{equation}\label{e:vder}
\ddt v_\a' + v_\a v_\a'' = \int_\R \phi_\a'(x-y)(v_\a(y) - v_\a(x)) r_\a(y) \dy  - (\e R_\a(t) + e_\a) v_\a'.
\end{equation}
We already know from \thm{t:hydroalign} that the velocity variations are exponentially decaying with the desired rate. Let us denote by $E(t)$ a generic function with such exponential decay. Then, in Lagrangian coordinates,
\[
\ddt |v_\a'|^2  \leq  E(t) v_\a' - \frac{c_0}{2} |v_\a'|^2 \leq E(t) -  \frac{c_0}{4} |v_\a'|^2.
\]
This establishes the decay for $v_\a'$ on the support of $r_\a$.  Next, 
\begin{equation}
\begin{split}
\ddt v_\a''  + 2v_\a' v_\a'' & = \int_\R \phi_\a''(x-y)(v_\a(y) - v_\a(x)) r_\a(y) \dy \\
&-2 v_\a' \phi_\a'\ast r_\a  -  (\e R_\a(t) + e_\a) v_\a''.
\end{split}
\end{equation}
So, similar to the previous
\[
\ddt |v_\a''|^2  \leq  E(t) -  \frac{c_0}{4} |v_\a''|^2.
\]
Thus, $|v_\a''| \sim E(t)$.  As to the density,
\begin{equation}\label{e:densE1}
\ddt r_\a'  = - 2 v_\a' r_\a' - v_\a'' r_\a = E(t) r_\a' + E(t),
\end{equation}
and we obtain uniform in time control over $\|r_\a'\|_\infty$. 

To conclude strong flocking we write
\begin{equation}\label{e:densE0}
\ddt r_\a = -v_\a r_\a' - v_\a' r_\a = E(t).
\end{equation}
This shows that $r_\a(t)$ is Cauchy in $t$ in the metric of $L^\infty$. Hence, there exists $\bar{r}_\a \in L^\infty$ such that $\| r_\a(t) - \bar{r}_\a\|_\infty = E(t)$. Since $r_\a'$ is uniformly bounded, this also shows that $\bar{r}_\a$ is Lipschitz. Convergence in $C^\g$, $\g<1$, follows by interpolation. Finally, passing to the original coordinate frame gives the desired result.

As to the global statement, the result follows from exact same argument above by noting that $\e R_\a(t) \geq \e M \psi(\bar{D}) = c_0$, and all the macroscopic momenta $U_\a$ align by \thm{t:hydroalign}. 
\end{proof}

\begin{remark} We note that the strong flocking result is new even in the classical mono-flock context. The work \cite{ST2} treats the more restrictive case of a kernel with positive infimum, while \cite{TT2014} only claims bounded diameter. 
\end{remark}

\subsection{Global existence and 1D multi-flocking: singular kernel case}\label{sec:1Dsing} 
In the case when $\psi$ is smooth and inner communication kernels are singular
\begin{equation}\label{e:kersing}
	\phi_\a(r) = \frac{1}{r^{1+s}}, \quad 0<s<2,
\end{equation}
the system \eqref{e:CShydro1D} becomes of fractional parabolic type with bounded drift (due to the maximum principle) and bounded dumbing term. Considered under periodic settings $\T$ with no-vaccum initial condition $\rho_\a > 0$, $\forall \a = 1,\ldots, A$, we encounter no additional issues in the application of the regularity results obtained in \cite{ST1,ST2,ST3}. Indeed, the dumping term $\e R_\e \bv_\a$ has no effect on the continuity equation written in parabolic form
\[
\p_t r_\a + v_\a r_\a' + e_\a r_\a = r_\a \L_s r_\a,
\]
where $\L_s=-(-\Delta)^{s/2}$ is the fractional $s$-Laplacian.  As to the momentum equation it can be viewed as a bounded force for the initial H\"older regularization applied from \cite{S2012,SS2016} in the way identical to our previous works. Further adaptation of the non-local maximum principal estimates of Constantin-Vicol \cite{CV2012} and continuation criteria for higher order Sobolev spaces is straightforward. 
\begin{theorem}
	Let $\psi$ is a smooth kernel and $\phi_\a$ be the kernel of $\L_s$ on $\T^1$. Then system \eqref{e:CShydro} admits a global solition for any initial data in $u_\a \in H^4(\T^1)$, $\rho_\a \in H^{3+s}(\T^1)$ with no vacuum:
	\[
	\min_{\a,x\in\T^1} \rho_\a(x,0) > 0.
	\]
	The solution belongs locally to 
	\[
	u_\a \in C([0,\infty), H^4) \cap L^2([0,\infty), H^{4+ \frac{s}{2}}), \quad 	\rho_\a \in C([0,\infty), H^{3+s}) \cap L^2([0,\infty), H^{3+ \frac{3s}{2}}).
	\]
\end{theorem}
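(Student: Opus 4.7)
The plan is to reduce the proof to an application of the existing regularity theory for the 1D fractional Euler-alignment system, treating the inter-flock coupling as a bounded, time-dependent zero-order perturbation. The argument proceeds flock-by-flock, because the macroscopic observables $(\bX_\b,\bV_\b)$ obey a closed ODE \eqref{e:CSmacro} decoupled from the microscopic fields.

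First I would pass to the shifted reference frame \eqref{e:CShydro1D}, so that the inter-flock contribution reduces to $-\e R_\a(t) v_\a$ with $R_\a(t)=\sum_{\b\neq\a}M_\b\psi(|X_\a-X_\b|)$. Since $\psi$ is smooth and the finite-dimensional ODE \eqref{e:CSmacro} obeys the maximum principle, $R_\a$ is a priori a smooth bounded function of $t$ alone, independent of the microscopic regularity. Each $\a$-flock is then driven by a scalar system identical to the mono-flock fractional Euler-alignment equations up to a bounded zero-order coefficient. In particular, the continuity equation takes the parabolic form
\[
\p_t r_\a + v_\a r_\a' = r_\a \L_s r_\a - e_\a r_\a,
\]
and $e_\a = v_\a' + \l_\a\phi_\a\ast r_\a$ satisfies the logistic equation \eqref{e:log} along characteristics of $v_\a$, modified only by an additive bounded forcing $-\e R_\a(t) v_\a'$.

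The core of the argument is to import, term-by-term, the scalar machinery developed in \cite{ST1,ST2,ST3,CV2012,S2012,SS2016}. The Constantin-Vicol nonlocal maximum principle \cite{CV2012} applied to the perturbed $e_\a$-equation yields uniform upper and lower bounds for $e_\a$, and hence preserves the no-vacuum property of $r_\a$ on every compact time interval. With a priori $L^\infty$ bounds on $v_\a$, $e_\a$, and $r_\a^{\pm 1}$, the H\"older regularization results of Silvestre \cite{S2012} and Schwab-Silvestre \cite{SS2016} applied to the momentum equation for $v_\a$ (in which $-\e R_\a(t) v_\a$ is again absorbed into the bounded forcing) produce uniform $v_\a\in C^\gamma_{x,t}$. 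From there, the continuation criterion and higher-order energy estimates of \cite{ST1,ST2,ST3}, based on non-local maximum principles in Sobolev spaces and standard commutator estimates, propagate regularity to the claimed $u_\a\in C_t H^4 \cap L^2_t H^{4+s/2}$ and $\rho_\a\in C_t H^{3+s}\cap L^2_t H^{3+3s/2}$ classes.

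The main obstacle is simply to verify that none of the scalar estimates is broken by the additive $\e R_\a(t) v_\a$ term. This succeeds because $R_\a(t)$ is bounded a priori from the decoupled macroscopic ODE, and the coercive $-e_\a^2$ structure of the logistic equation dominates any linear perturbation in $e_\a$. Thus the multi-flock coupling adds only bookkeeping in the index $\a$ and no substantive new analytical difficulty beyond the mono-flock theory.
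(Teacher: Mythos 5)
Your proposal follows essentially the same route as the paper: pass to the shifted frame so the inter-flock coupling becomes the bounded damping $-\e R_\a(t)v_\a$ (with $R_\a$ controlled a priori by the decoupled macroscopic ODE \eqref{e:CSmacro}), observe that this term drops out of the parabolic form of the continuity equation and enters the momentum equation only as a bounded force, and then import the mono-flock machinery of \cite{ST1,ST2,ST3} together with \cite{CV2012,S2012,SS2016} flock by flock. The paper itself gives only this sketch, so your write-up is at the same level of rigor and matches its argument.
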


\section{Appendix. Global existence for singular kernels}
 Although collisions between the agents are possible with smooth kernels, this does not cause issues from the point of view of proving global existence of \eqref{e:CS}, using Picard iteration  and continuation.  If the kernels $\phi_\a$ are singular, however,  collisions lead to finite time blowup, so this case needs to be addressed separately.  As was shown in \cite{CCMP2017},  if the kernel is sufficiently singular collisions are prevented by strong close range alignment. We  revisit this result in the context of multi-flocks. 

\begin{theorem}[{\bf Singular communication kernels}]\label{t:coll}
Suppose the $\a$-flock is governed by a singular communication so that
\begin{equation}\label{e:fathead}
\int_0^1 \phi_\a(r) \dr = \infty.
\end{equation}
Then the flock experiences no internal collisions between agents.
\end{theorem}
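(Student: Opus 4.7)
The plan is to leverage the mono-flock argument of \cite{CCMP2017} and observe that the multi-flock coupling does not affect the collision-avoidance mechanism. The crucial structural observation is that the inter-flock term in \eqref{e:CS} depends on the agent index $i$ only through $\bv_{\a i}$. Hence, upon subtracting the equations for two agents $i \neq j$ in the same $\a$-flock, the inter-flock contribution to the velocity difference $w_{ij} := \bv_{\a i} - \bv_{\a j}$ collapses to $-\e R_\a(t)\, w_{ij}$, an additional non-negative linear damping that can only inhibit a collision.

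First I would pass to the shifted frame \eqref{e:wv}; this leaves $r_{ij}(t) := |\bx_{\a i}-\bx_{\a j}|$ and $w_{ij}$ invariant, and the resulting system \eqref{e:CSshift} inherits a coordinate-wise maximum principle, providing a uniform bound $|\bw_{\a i}(t)| \leq W_0$. Isolating the diagonal contributions $k \in \{i, j\}$ in $\dot w_{ij}$ then yields the pair identity
\begin{equation*}
\dot w_{ij} \;=\; -\l_\a (m_{\a i}+m_{\a j})\,\phi_\a(r_{ij})\, w_{ij} \;+\; \mathcal{R}_{ij} \;-\; \e R_\a(t)\, w_{ij},
\end{equation*}
where the remainder $\mathcal{R}_{ij}$ collects the off-diagonal $k \neq i, j$ contributions. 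Combined with the elementary estimate $|\dot r_{ij}| \leq |w_{ij}|$, this is structurally identical to the Cucker--Smale pair system treated in \cite{CCMP2017}.

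The core device is the primitive $\Phi_\a(r) := \int_r^1 \phi_\a(s)\, ds$, which diverges as $r \to 0^+$ thanks to the fat-head hypothesis \eqref{e:fathead}, together with the pairwise Lyapunov
\begin{equation*}
\mathcal{L}_{ij}(t) \;:=\; |w_{ij}(t)| \;+\; \l_\a(m_{\a i}+m_{\a j})\,\Phi_\a(r_{ij}(t)).
\end{equation*}
Using $|\dot r_{ij}|\leq |w_{ij}|$, the dissipative $\l_\a(m_{\a i}+m_{\a j})\phi_\a(r_{ij})|w_{ij}|$ contribution to $\tfrac{d}{dt}|w_{ij}|$ is exactly compensated by the time derivative of the potential part, leaving $\dot{\mathcal{L}}_{ij} \leq |\mathcal R_{ij}|$. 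Any finite-time collision $r_{ij}(T^*)\to 0$ would then force $\mathcal L_{ij}(T^*)=\infty$, and the argument closes provided $|\mathcal R_{ij}|$ remains integrable on $[0, T^*)$.

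The main obstacle, mirroring the technical heart of \cite{CCMP2017}, is controlling $\mathcal R_{ij}$ when several pairs may approach collision simultaneously. The resolution relies on the telescoping
\begin{equation*}
\phi_{\a ik}(\bw_{\a k}-\bw_{\a i}) - \phi_{\a jk}(\bw_{\a k}-\bw_{\a j}) \;=\; -\phi_{\a ik}\,w_{ij} \;+\; (\phi_{\a ik}-\phi_{\a jk})(\bw_{\a k}-\bw_{\a j}),
\end{equation*}
whose first summand augments the pair dissipation, while the second is controlled via the triangle-inequality bound $|r_{\a ik} - r_{\a jk}| \leq r_{ij}$ together with the monotonicity of $\phi_\a$, upon selecting $(i, j)$ as a minimum-distance pair. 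The multi-flock framework adds nothing beyond the benign damping $-\e R_\a(t)w_{ij}$, so the entire CCMP argument transfers verbatim.
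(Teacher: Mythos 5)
Your reduction of the inter-flock coupling to the benign damping $-\e R_\a(t)\,w_{ij}$ is correct and matches the paper (which simply drops the corresponding negative term), and the pairwise Lyapunov $\cL_{ij}=|w_{ij}|+\l_\a(m_{\a i}+m_{\a j})\Phi_\a(r_{ij})$ would suffice for an isolated binary collision. But the step you yourself flag as ``the main obstacle'' --- controlling the remainder $\cR_{ij}$ when other agents $k$ participate in the same collision --- is exactly where the argument does not close, and the mechanism you invoke does not repair it. Choosing $(i,j)$ as a minimum-distance pair and combining $|r_{ik}-r_{jk}|\le r_{ij}$ with the monotonicity of $\phi_\a$ only yields $|\phi_{\a ik}-\phi_{\a jk}|\le\phi_\a\big(\min(r_{ik},r_{jk})\big)\le\phi_\a(r_{ij})$, which is singular of strictly higher order than the potential part $\Phi_\a(r_{ij})=\int_{r_{ij}}^1\phi_\a$ of $\cL_{ij}$ (compare $1/r$ versus $|\log r|$ for $\phi_\a(r)=1/r$). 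The resulting inequality $\dot{\cL}_{ij}\lesssim\phi_\a(r_{ij}(t))$ gives neither boundedness of $\cL_{ij}$ nor the integrability of $|\cR_{ij}|$ on $[0,T^*)$ that your contradiction requires: if, say, $r_{ij}(t)\sim T^*-t$, the right-hand side is not integrable. A mean-value bound $|\phi_{\a ik}-\phi_{\a jk}|\le|\phi_\a'(\xi)|\,r_{ij}$ fares no better since $\phi_\a'$ is more singular still. So the hard case of a simultaneous multi-agent collision is asserted rather than proved.

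The paper sidesteps this difficulty by never forming a difference of singular kernel values. It fixes the cluster $\O^*_\a$ of agents that collide at one point at the first collision time $T^*$ and tracks the sup-pair functionals $\cD^*_\a=\max_{i,j\in\O^*_\a}|\bx_{\a i}-\bx_{\a j}|$ and $\cA^*_\a=\max_{i,j\in\O^*_\a}\lan\boldsymbol{\ell},\bv_{\a i}-\bv_{\a j}\ran$. For the maximizing direction and pair, every term with $k\in\O^*_\a$ has a favorable sign, so the singular values $\phi_{\a ik},\phi_{\a jk}$ enter only through the lower bound $\phi_\a(\cD^*_\a)$ as pure dissipation; the terms with $k\notin\O^*_\a$ involve distances bounded below by some $\d>0$ on $[0,T^*)$ and hence a bounded kernel. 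This gives $\dot\cA^*_\a\le C_1-C_2\phi_\a(\cD^*_\a)\cA^*_\a$, whence $E_\a=\cA^*_\a+C_2\int_{\cD^*_\a}^1\phi_\a(r)\dr$ satisfies $\dot E_\a\le C_1$ and stays bounded, contradicting $\cD^*_\a(T^*)=0$ via \eqref{e:fathead}. To salvage a pairwise formulation you would have to reproduce the genuinely cluster-based estimates of \cite{CCMP2017}; as written, your proof covers only collisions of a pair isolated from all other agents.
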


\begin{proof} The proof given below is a simplified version of the argument given in \cite{CCMP2017}. 
First, we assume for notational simplicity that all the masses are unity.
Let us assume that for a given non-collisional initial condition $(\bx_{\a i}, \bv_{\a i} )_{i,\a}$ a collision occurs at time $T^*$ for the first time. Let $\O^*_\a \ss \O_\a= \{1,\ldots,N_\a\}$ are the indexes of the  agents that collided at one point. Hence, there exists a $\d>0$ such that $|\bx_{\a i}(t) - \bx_{\a k}(t)| \geq \d$ for all $i \in \O^*_\a$ and $k \in \O_\a \backslash   \O^*_\a$. Denote 
\[
\cD^*_\a (t) = \max_{i,j \in \O^*_\a} |\bx_{\a i}(t) - \bx_{\a j}(t)|, \quad \cA_\a^*(t) = \max_{i,j \in \O^*_\a} |\bv_{\a i}(t) - \bv_{\a j}(t)|  =  \max_{\substack{\boldsymbol{\ell} \in \R^n: |\boldsymbol{\ell}| = 1\\ i, j\in \O^*_\a}}  \lan \boldsymbol{\ell}, \bv_{\a i}-\bv_{\a j}\ran.
\]
Directly from the characteristic equation we obtain $| \dot{\cD}^*_\a | \leq \cA_\a^*$, and hence
\begin{equation}
	- \dot{\cD}^*_\a \leq \cA_\a^*.
\end{equation}
Let us fix a maximizing triple $(\ell, i,j)$ for $ \cA_\a^*(t)$ and compute using  the momentum equation 
\[
\begin{split}
\ddt \cA_\a^* &= \sum_{k=1}^N m_{\a k}[ \phi_\a(\radial{\bx_{ik}}) \ell( \bv_{\a k i})  - \phi_\a(\radial{\bx_{jk}}) \ell( \bv_{\a k j})]   - \cA_\a^* R_\a(t) \\
&= \sum_{k\in \O_\a^*} m_{\a k}[ \phi_\a(\radial{\bx_{ik}}) \ell( \bv_{\a k j} -  \bv_{\a i j} )  + \phi_\a(\radial{\bx_{jk}}) \ell( - \bv_{\a k i } - \bv_{\a ij })] \\
&+\sum_{k\not \in \O^*_\a} m_{\a k}[ \phi_\a(\radial{\bx_{ik}}) \ell( \bv_{\a k i})  - \phi_\a(\radial{\bx_{jk}}) \ell( \bv_{\a k j})]     - \cA_\a^* R_\a(t).
\end{split}
\]
The term $ - \cA_\a^* R_\a(t)$ is negative and will be dropped. In the first sum all terms are negative, so we can pull out the minimal value of the kernel which is $\phi_\a(\cD^*_\a)$. In the second sum, all the distances $|\bx_{ik}|$, $|\bx_{jk}|$ are separated by $\d$ up to the critical time $T^*$. So, the kernel will remain bounded. Putting together these remarks we obtain
\[
\ddt \cA_\a^*  \leq C_1 - C_2 \phi_\a(\cD^*_\a)\cA_\a^* .
\]
Let us consider the energy functional
\[
E_\a(t) =  \cA_\a^*(t)  + C_2 \int_{\cD_\a^*(t)}^1 \phi_\a(r) \dr.
\]
From the above we obtain $\ddt E_\a(t)  \leq C_1$.  So, $E_\a$ remains bounded up to the critical time, which implies that $\cD_\a^*(t)$ stays away from zero.
\end{proof}

\begin{corollary}
	Suppose $\psi$ is a smooth kernel, and  each kernel $\phi_\a$ is either smooth or condition \eqref{e:fathead}  holds. Then the system \eqref{e:CS} admits a unique global solution from any initial datum.
\end{corollary}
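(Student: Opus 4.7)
The plan is to run a standard local-existence-plus-continuation argument and show that the only possible obstruction to continuation, namely an intra-flock collision against a singular $\phi_\a$, is ruled out by \thm{t:coll}. Since $\psi$ is smooth by hypothesis, the inter-flock drift in \eqref{e:CS} is globally smooth in the macroscopic positions $\bX_\b$, so no inter-flock singularity can occur.

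On the open set $\cU \subset \R^{2dN}$ where every pair of intra-flock distances $|\bx_{\a i} - \bx_{\a j}|$ carrying a singular $\phi_\a$ is strictly positive, the right-hand side of \eqref{e:CS} is locally Lipschitz in $(\bx,\bv)$. Picard iteration therefore yields a unique local solution on a maximal interval $[0,T^*)$. First I would bound the velocities: the global maximum principle recalled at the start of \sect{sec:rate} shows that each coordinate of every $\bv_{\a i}(t)$ is pinched between its initial maximum and minimum, so $\sup_{t<T^*}\max_{\a,i}|\bv_{\a i}(t)|$ is controlled by the data and $|\bx_{\a i}(t)|$ grows at most linearly on $[0,T^*)$. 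Consequently, if $T^* < \infty$, the only way the solution can fail to extend past $T^*$ is for some pair of agents in a flock with singular $\phi_\a$ to collide at $T^*$.

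To exclude this I would invoke \thm{t:coll} inside each singular flock. The proof of that theorem controls $\cA_\a^*$ via the Lyapunov functional $E_\a(t) = \cA_\a^*(t) + C_2 \int_{\cD_\a^*(t)}^{1} \phi_\a(r)\,\dr$, and its only interaction with the rest of the system enters through two terms: the dumping $-R_\a(t)\cA_\a^*$, which is non-positive and is discarded with a favorable sign, and the constant $C_1$ coming from the bounded-kernel contribution of indices $k \notin \O_\a^*$, whose distances to $\O_\a^*$ stay bounded below by some $\d>0$ up to $T^*$ by the a priori position bound above. Hence $\ddt E_\a \leq C_1$ on $[0,T^*)$, so $E_\a$ is bounded and the fat-head hypothesis \eqref{e:fathead} forces $\cD_\a^*(t)$ to stay away from zero, contradicting the assumed collision. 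Therefore $T^* = \infty$, and uniqueness on $[0,\infty)$ follows from local uniqueness on $\cU$.

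The main (mild) obstacle is simply verifying that the Lyapunov argument of \thm{t:coll} is not spoiled by coupling to the rest of the multi-flock, which as noted reduces to the observation that the new term has the right sign and the smooth inter-flock drift is uniformly bounded along the Picard solution; once this is checked, the corollary is immediate.
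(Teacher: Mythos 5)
Your argument is correct and is essentially the paper's: the corollary is presented as an immediate consequence of \thm{t:coll} via the standard Picard-iteration-plus-continuation scheme sketched in the introductory remark, with the global maximum principle supplying the a priori velocity (hence position) bound so that an intra-flock collision against a singular $\phi_\a$ is the only possible obstruction to continuation. Your additional check that the coupling to the rest of the multi-flock does not spoil the collision-avoidance functional is exactly the point made in the Appendix, where the term $-\cA_\a^* R_\a(t)$ is dropped by sign and the out-of-cluster contribution is absorbed into the constant $C_1$.
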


\noindent
We conclude by noting that this does not preclude collisions between agents from different flocks.\newline


\end{document}